\definecolor{linkblue}{RGB}{1,1,190}
\definecolor{citered}{RGB}{190,1,1}
\theoremstyle{plain}
\newtheorem{theorem}{\bf Theorem}[section]
\newtheorem{proposition}[theorem]{\bf Proposition}
\newtheorem{lemma}[theorem]{\bf Lemma}
\newtheorem{corollary}[theorem]{\bf Corollary}
\theoremstyle{definition}
\newtheorem{remark}[theorem]{\bf Remark}
\newcommand{\N}{\mathbb N}
\newcommand{\Z}{\mathbb Z}
\DeclareMathOperator{\ord}{ord}
\DeclareMathOperator{\spec}{spec}\DeclareMathOperator{\supp}{supp}
\DeclareMathOperator{\End}{End}
\newcommand{\DP}{\negthinspace :\negthinspace}
\newcommand{\red}{{\text{\rm red}}}
\newcommand{\id}{{\text{\rm id}}}
\renewcommand{\t}{\,|\,}
\numberwithin{equation}{section}
\subjclass[2010]{20M12, 20M13, 13A05, 13A15, 11B30, 11R27}
\thanks{This work was supported by the Austrian Science Fund FWF (Project Number P36852-N) and by the National Natural Science Foundation of China (Grant No. 12001331)}
\begin{document}

\title[On Monoids of plus-minus weighted Zero-Sum Sequences]{On Monoids of plus-minus weighted Zero-Sum Sequences:\\ The Isomorphism Problem and the Characterization Problem}

\author{Florin Fabsits and Alfred Geroldinger and Andreas Reinhart and Qinghai Zhong}

\address{University of Graz, NAWI Graz\\
Department of Mathematics and Scientific Computing\\
Heinrichstra{\ss}e 36\\
8010 Graz, Austria (F.~Fabsits, A.~Geroldinger, A.~Reinhart, and Q.~Zhong)}

\address{School of Mathematics and Statistics, Shandong University of Technology, Zibo, Shandong 255000, China (Q.~Zhong)}
\email{alfred.geroldinger@uni-graz.at, andreas.reinhart@uni-graz.at, qinghai.zhong@uni-graz.at}
\urladdr{https://imsc.uni-graz.at/geroldinger, https://imsc.uni-graz.at/reinhart/, https://imsc.uni-graz.at/zhong/}
\keywords{Krull monoids, Mori monoids, sets of lengths, weighted zero-sum sequences}

\begin{abstract}
Let $G$ be an additive abelian group. A sequence $S=g_1\cdot\ldots\cdot g_{\ell}$ of terms from $G$ is a plus-minus weighted zero-sum sequence if there are $\varepsilon_1,\ldots,\varepsilon_{\ell}\in\{1,-1\}$ such that $\varepsilon_1 g_1+\ldots+\varepsilon_{\ell} g_{\ell}=0$. We first characterize (in terms of $G$) when the monoid $\mathcal{B}_{\pm}(G)$ of plus-minus weighted zero-sum sequences is Mori resp. Krull resp. finitely generated. After that we study the Isomorphism and the Characterization Problem for monoids of plus-minus weighted zero-sum sequences.
\end{abstract}

\maketitle

\smallskip
\section{Introduction}\label{1}
\smallskip

Let $G$ be an additively written abelian group. We consider (finite unordered) sequences (with repetition allowed) of terms from $G$ as elements of the (multiplicatively written) free abelian monoid $\mathcal{F}(G)$ with basis $G$. Let $\Gamma\subset\End(G)$ be a non-empty subset of the endomorphism group of $G$. A sequence $S= g_1\cdot\ldots\cdot g_{\ell}\in\mathcal{F}(G)$ is called a ($\Gamma$-)weighted zero-sum sequence if there are $\gamma_1,\ldots,\gamma_{\ell}\in\Gamma$ such that $\gamma_1(g_1)+\ldots+\gamma_{\ell}(g_{\ell})=0$. Then the set $\mathcal{B}_{\Gamma}(G)$ of all $\Gamma$-weighted zero-sum sequences over $G$ is a submonoid of $\mathcal{F}(G)$. A special emphasis has been laid on the case $\Gamma=\{\id_G,-\id_G\}$ and in that case one speaks of plus-minus weighted zero-sum sequences and the associated monoid is denoted by $\mathcal{B}_{\pm}(G)$.

Since the last decade combinatorial and number theoretic problems of weighted zero-sum sequences have seen a lot of interest. Many of the classical zero-sum invariants (including the Davenport constant $\mathsf{D}(G)$, the Erd{\H{o}}s-Ginzburg-Ziv constant $\mathsf{s}(G)$, Gao's constant $\mathsf{E}(G)$, the Harborth constant $\mathsf{g}(G)$, and more) have found their weighted analogs (for a weighted version of $\mathsf{E}(G)$ see \cite[Chapter 16]{Gr13a}, for connections with coding theory see \cite{Ma-Or-Sa-Sc15}, and for a sample of papers with a strong number theoretic flavor see \cite{Ad-Gr-Su12a,Gr06a,Gr-He15a,Gr-Ma-Or12,Gr-Ph-Po13,HK14b,Ma-Or-Ra-Sc13a,Ma-Or-Ra-Sc16a,Ma-Or-Sc14a}; see also the remark at the end of Section \ref{2}).

Algebraic properties of the monoid of weighted zero-sum sequences were first studied by Schmid et al. in \cite{B-M-O-S22}. There are transfer homomorphisms from norm monoids in orders of algebraic number fields (and others) to monoids of weighted zero-sum sequences (see \cite[Theorem 7.1]{B-M-O-S22}, \cite[Theorems 3.2 and 3.5]{Ge-HK-Zh22}). This implies that arithmetic questions in norm monoids (in particular, their sets of lengths) can be studied in monoids of weighted zero-sum sequences, and it demonstrates the connection of the latter with questions in commutative ring theory.

In the present paper we focus on the monoid $\mathcal{B}_{\pm}(G)$ of plus-minus weighted zero-sum sequences. In Section~\ref{3}, we characterize when $\mathcal{B}_{\pm}(G)$ is a Mori monoid (Theorem~\ref{3.4}), when it is a Krull monoid (Corollary~\ref{3.5}) and when it is finitely generated resp. a C-monoid (Theorem~\ref{3.7}).

In Section~\ref{4}, we study the Isomorphism and the Characterization Problem. We recall these two problems.

\smallskip
\noindent
{\bf The Isomorphism Problem} (for plus-minus weighted zero-sum sequences).
Let $G_1$ and $G_2$ be abelian groups such that the monoids $\mathcal{B}_{\pm}(G_1)$ and $\mathcal{B}_{\pm}(G_2)$ are isomorphic. Are the groups $G_1$ and $G_2$ isomorphic?

\smallskip
It is well-known that the Isomorphism Problem has an affirmative answer for monoids of (unweighted) zero-sum sequences over abelian groups (\cite[Corollary 2.5.7]{Ge-HK06a}). The Isomorphism Problem was recently studied for power monoids of numerical monoids (\cite{Bi-Ge23,Tr-Ya24a}) and for monoids of product-one sequences over non-abelian groups (\cite{Ge-Oh24a}). For the Isomorphism Problem for group rings we refer to the survey \cite{Ma22}. In the present paper we give an affirmative answer to the Isomorphism Problem for plus-minus weighted zero-sum sequences in case that one group is a direct sum of cyclic groups (Theorem~\ref{4.3}).

\smallskip
The Characterization Problem asks whether or not two monoids (or domains), from a given class of monoids, are already uniquely determined by their systems of sets of lengths. This problem has its origin in algebraic number theory. Indeed, in the 1970s Narkiewicz asked wether or not the ideal class group of a number field can be characterized by arithmetic properties of the ring of integers. Nowadays, this question got reformulated in terms of monoids of (unweighted) zero-sum sequences over finite abelian groups, where an affirmative answer is expected (for an overview, we refer to the survey \cite{Ge-Zh20a}).

\smallskip
\noindent
{\bf The Characterization Problem} (for plus-minus weighted zero-sum sequences).
Let $G_1$ and $G_2$ be finite abelian groups with Davenport constant $\mathsf{D}_{\pm}(G_1)\ge 4$ such that their systems of sets of lengths $\mathcal{L}\big(\mathcal{B}_{\pm}(G_1)\big)$ and $\mathcal{L}\big(\mathcal{B}_{\pm}(G_2)\big)$ coincide. Are the groups $G_1$ and $G_2$ isomorphic?

\smallskip
Clearly, a necessary condition for an affirmative answer to the Characterization Problem (for a class of abelian groups) is an affirmative answer to the Isomorphism Problem.
Any work on the Characterization Problem (both for unweighted as well as for weighted zero-sum sequences) requires a lot of ingredients from additive combinatorics. If $\mathcal{L}\big(\mathcal{B}_{\pm}(G_1)\big)=\mathcal{L}\big(\mathcal{B}_{\pm}(G_2)\big)$ resp. $\mathcal{L}\big(\mathcal{B}(G_1)\big)=\mathcal{L}\big(\mathcal{B}(G_2)\big)$, then one easily gets that for the associated Davenport constants we have $\mathsf{D}_{\pm}(G_1)=\mathsf{D}_{\pm}(G_2)$ resp. $\mathsf{D}(G_1)=\mathsf{D}(G_2)$. In spite of being studied since decades, the precise value of the Davenport constant of a finite abelian group $G$ (in terms of the group invariants) is unknown for general groups of rank $\mathsf{r}(G)\ge 3$. In Section~\ref{4}, we settle the Characterization Problem for plus-minus weighted zero-sum sequences in the case when $G_1$ is a cyclic group of odd order (Theorem~\ref{4.6}).

\smallskip
\section{Prerequisites}\label{2}
\smallskip

We denote by $\mathbb P\subset\N\subset\N_0\subset\Z$ the sets of prime number, positive integers, non-negative integers, and integers. For $a,b\in\Z$, let $[a,b ]=\{x\in\Z\colon a\le x\le b\}$ be the discrete interval between $a$ and $b$. For subsets $A,B\subset\Z$, we denote by $A+B=\{a+b\colon a\in A,b\in B\}$ the {\it sumset} of $A$ and $B$, and for $k\in\Z$, we set $k+A=\{k\}+A$. For $k\in\N$, let $\N_{\geq k}=\{a\in\mathbb{N}\colon a\geq k\}$ and let $k\cdot A=\{ka\colon a\in A\}$ be the {\it dilation} of $A$ by $k$. If $A=\{m_0,\ldots,m_k\}\subset\Z$, with $k\in\N_0$ and $m_0<\ldots<m_k$, then $\Delta(A)=\{m_i-m_{i-1}\colon i\in [1,k]\}$ is the {\it set of distances} of $A$.

By a {\it monoid}, we mean a commutative cancellative semigroup with identity element, and we use multiplicative notation. Let $H$ be a monoid. Then $H^{\times}$ denotes the group of invertible elements and $\mathsf{q}(H)$ denotes the quotient group. Furthermore, let
\begin{itemize}
\item $H'=\{x\in\mathsf{q}(H)\colon\text{there is some }n\in\N\text{ such that }x^m\in H\text{ for each }m\in\N_{\geq n}\}$ be the \,{\it seminormal closure}\, of $H$,
\item $\widetilde{H}=\{x\in\mathsf{q}(H)\colon\text{there is some }n\in\N\text{ such that }x^n\in H\}$ be the \,{\it root closure} of $H$, and let
\item $\widehat{H}=\{x\in\mathsf{q}(H)\colon\text{there is some }c\in H\text{ such that }cx^n\in H\ \text{for all}\ n\in\N\}$ be the \,{\it complete integral closure}\, of $H$.
\end{itemize}
Then $H\subset H'\subset\widetilde H\subset\widehat H\subset\mathsf{q}(H)$, and $H$ is said to be {\it seminormal}, or {\it root closed}, or {\it completely integrally closed} if $H=H^{\prime}$, or $H=\widetilde H$, or $H=\widehat H$. For a set $P$, we denote by $\mathcal{F}(P)$ the free abelian monoid with basis $P$, and we will use multiplicative notation for $\mathcal{F}(P)$. The monoid $H$ is {\it factorial} if its associated reduced monoid $H_{\red}=\{aH^{\times}\colon a\in H\}$ is free abelian.
A monoid homomorphism $\varphi\colon H\to D$ is said to be a
\begin{itemize}
\item {\it divisor homomorphism} if $a,b\in H$ and $\varphi(a)\t\varphi(b)$ (in $D$) implies that $a\t b$ (in $H$),
\item {\it divisor theory} (for $H$) if $D$ is free abelian, $\varphi$ is a divisor homomorphism, and for every $\alpha\in D$ there are $a_1,\ldots,a_m\in H$ such that $\alpha=\gcd\big(\varphi(a_1),\ldots,\varphi(a_m)\big)$.
\end{itemize}
If $\varphi\colon H\to D$ is a divisor theory, then $\mathcal{C}(H)=\mathsf{q}(D)/\mathsf{q}(\varphi(H))$ is the {\it $($divisor$)$ class group} of $H$. If $H$ is a submonoid of $D$, then it is easily checked that the inclusion $H\hookrightarrow D$ is a divisor homomorphism if and only if $H=\mathsf{q}(H)\cap D$.

\smallskip
\noindent
{\bf Ideal Theory of Monoids.} Our main references are \cite{Ge-HK06a,HK98}. To fix notation, we gather some key concepts needed in the sequel. For subsets $I,J\subset\mathsf{q}(H)$, we set $(I\DP J)=\{x\in\mathsf{q}(H)\colon xJ\subset I\}$, $I^{-1}=(H\DP I)$, and $I_v=(I^{-1})^{-1}$. Then $I\subset H$ is called an $s$-{\it ideal} if $IH=I$ and it is called a {\it divisorial ideal} (or a {\it $v$-ideal}) if $I=I_v$. We denote by $s$-$\spec(H)$ the set of prime $s$-ideals of $H$ and by $\mathfrak{X}(H)$ the set of minimal non-empty prime $s$-ideals of $H$.
The monoid $H$ is said to be a
\begin{itemize}
\item {\it Mori monoid} if it satisfies the ascending chain condition on divisorial ideals,
\item {\it Krull monoid} if it is a completely integrally closed Mori monoid (equivalently, if it has a divisor theory).
\end{itemize}
If $H$ is a Krull monoid, then every $v$-ideal is $v$-invertible and the monoid of $v$-ideals is free abelian (with $v$-multiplication as operation) with basis $\mathfrak{X}(H)$. If $\mathcal{F}_v(H)$ denotes the semigroup of fractional $v$-ideals, then $\mathcal{C}_v(H)=\mathcal{F}_v(H)^{\times}/\{aH\colon a\in\mathsf{q}(H)\}$ is the {\it $v$-class group} of $H$. If $H$ is a Krull monoid, then $\mathcal{C}_v(H)$ is isomorphic to the divisor class group of $H$.

We need the concept of C-monoids (for details see \cite[Chapter 2]{Ge-HK06a}). Let $F$ be a factorial monoid and let $H\subset F$ be a submonoid. Two elements $y,y'\in F$ are called $H$-equivalent if $y^{-1}H\cap F={y'}^{-1}H\cap F$, equivalently, if
\[
\text{for all}\ x\in F,\ \text{we have}\ xy\in H\ \text{if and only if}\ x y'\in H\,.
\]
This defines a congruence relation on $F$, and for $y\in F$, we denote by $|y]_H^F=[y]$ its congruence class. Then
\[
\mathcal{C}^*(H,F)=\big\{[y]\colon y\in (F\setminus F^{\times})\cup\{1\}\big\}\subset\mathcal{C}(H,F)=\big\{[y]\colon y\in F\big\}
\]
are commutative semigroups with identity element $[1]$, $\mathcal{C}(H,F)$ is the {\it class semigroup}, and $\mathcal{C}^*(H,F)$ is the {\it reduced class semigroup} of $H$ in $F$. A monoid $H$ is said to be a C-{\it monoid} (defined in $F$) if it is a submonoid of $F$ such that $H\cap F^{\times}=H^{\times}$ and $\mathcal{C}^*(H,F)$ is finite. If $H$ is a C-monoid, then $H$ is Mori, $(H\DP\widehat H)\ne\emptyset$, and $\mathcal{C}(\widehat H)$ is finite. Every Krull monoid with finite class group is a C-monoid and in that case the class semigroup and the class group coincide. A commutative ring is a C-ring if its monoid of regular elements is a C-monoid (for a sample of work on C-monoids and C-rings, see \cite{Br20a,Br21a,Fa-Zh22a,Ka16b,Oh20a,Re13a}).

\smallskip
\noindent
{\bf Arithmetic Theory of Monoids.} We denote by $\mathcal{A}(H)$ the set of atoms of $H$ and we say that $H$ is {\it atomic} if every non-invertible element of $H$ can be written as a finite product of atoms. If $a=u_1\cdot\ldots\cdot u_k\in H$, where $k\in\N$ and $u_1,\ldots,u_k\in\mathcal{A}(H)$, then $k$ is called a factorization length of $a$ and the set $\mathsf{L}(a)\subset\N$ of all possible factorization lengths of $a$ is called the {\it set of lengths} of $a$. It is convenient to set $\mathsf{L}(a)=\{0\}$ for $a\in H^{\times}$ and then
$\mathcal{L}(H)=\{\mathsf{L}(a)\colon a\in H\}$ denotes the {\it system of sets of lengths} of $H$. Thus, $H$ is atomic if and only if all sets of lengths are non-empty. Furthermore, $H$ is said to be
\begin{itemize}
\item {\it half-factorial} if $|L|=1$ for all $L\in\mathcal{L}(H)$,
\item a BF-{\it monoid} if all $L\in\mathcal{L}(H)$ are finite and non-empty.
\end{itemize}
Every Mori monoid is a BF-monoid and every factorial monoid is half-factorial.

\smallskip
\noindent
{\bf Sequences over abelian groups.} Let $G$ be an additively written abelian group and let $G_0\subset G$ be a subset. Then $[G_0]\subset G$ denotes the submonoid generated by $G_0$ and $\langle G_0\rangle\subset G$ is the subgroup generated by $G_0$. Let ${\rm exp}(G)\in\mathbb{N}\cup\{\infty\}$ denote the exponent of $G$. For $n\in\N$, let $C_n$ be an additive cyclic group with $n$ elements and let $nG=\{ng\colon g\in G\}$. For a prime $p\in\mathbb P$, $G$ is called an elementary $p$-group if $pG=0$ (equivalently, every nonzero element has order $p$). We denote by $\mathcal{F}(G)$ the multiplicatively written free abelian monoid with basis $G$. In additive combinatorics, elements of $\mathcal{F}(G)$ are called {\it sequences} over $G$. Let
\[
S=g_1\cdot\ldots\cdot g_{\ell}=\prod_{g\in G} g^{\mathsf{v}_g(S)}
\]
be a sequence over $G$, where $\ell\in\N_0$, $g_1,\ldots,g_{\ell}\in G$, and $\mathsf{v}_g(S)\in\N_0$ with $\mathsf{v}_g(S)=0$ for almost all $g\in G$. Then
\begin{itemize}
\item $|S|=\ell\in\N_0$ is the {\it length} of $S$,
\item $\supp(S)=\{g\in G\colon\mathsf{v}_g(S)>0\}\subset G$ is the {\it support} of $S$,
\item $\mathsf{h}(S)=\max\{\mathsf{v}_g(S)\colon g\in G\}$ is the {\it maximum multiplicity} of a term of $S$,
\item $\sigma(S)=g_1+\ldots+g_{\ell}=\sum_{g\in G}\mathsf{v}_g(S)g\in G$ is the {\it sum} of $S$, and
\item $\sigma_{\pm}(S)=\{\varepsilon_1 g_1+\ldots+\varepsilon_{\ell} g_{\ell}\colon\varepsilon_1,\ldots ,\varepsilon_{\ell}\in\{-1,1\}\}$ is the {\it set of plus-minus weighted sums} of $S$.
\end{itemize}
The sequence $S$ is called a
\begin{itemize}
\item {\it zero-sum sequence} if $\sigma(S)=0$,
\item {\it plus-minus weighted zero-sum sequence} if $0\in\sigma_{\pm}(S)$.
\end{itemize}
Then
\[
\mathcal{B}(G_0)=\{S\in\mathcal{F}(G_0)\colon S\ \text{is a zero-sum sequence}\}\subset\mathcal{F}(G_0)
\]
is the {\it monoid of zero-sum sequences} over $G_0$,
\[
\mathcal{B}_{\pm}(G_0)=\{S\in\mathcal{F}(G_0)\colon S\ \text{is a plus-minus weighted zero-sum sequence}\}
\]
is the {\it monoid of plus-minus weighted zero-sum sequences} over $G_0$, and we have
\[
\mathcal{B}(G_0)\subset\mathcal{B}_{\pm}(G_0)\subset\mathcal{F}(G_0)\,.
\]
Furthermore,
\begin{itemize}
\item $\mathsf{D}(G_0)=\sup\big\{|S|\colon S\in\mathcal{A}\big(\mathcal{B}(G_0)\big)\big\}$ is the {\it Davenport constant} of $G_0$, and
\item $\mathsf{D}_{\pm}(G_0)=\sup\big\{|S|\colon S\in\mathcal{A}\big(\mathcal{B}_{\pm}(G_0)\big)\big\}$ is the {\it plus-minus weighted Davenport constant} of $G_0$.
\end{itemize}
It is easy to see that $\mathcal{B}(G_0)$ and $\mathcal{B}_{\pm}(G_0)$ are BF-monoids, and
it is well-known that each of $\mathcal{A}\big(\mathcal{B}(G)\big)$, $\mathcal{A}\big(\mathcal{B}_{\pm}(G)\big)$, $\mathsf{D}(G)$, and $\mathsf{D}_{\pm}(G)$ is finite if and only if $G$ is finite.

We end with a remark on notation. For sequences over $G_0$ as well as for plus-minus weighted sequences over $G_0$ the following three properties have been studied. For simplicity, we formulate them only for plus-minus weighted sequences.
\begin{itemize}
\item[(a)] What is the smallest integer $N\in\N$ such that every sequence $S\in\mathcal{F}(G_0)$ has a plus-minus weighted zero-sum subsequence?

\item[(b)] What is the maximal length of a sequence that has no plus-minus weighted zero-sum subsequence?

\item[(c)] What is the maximal length of a minimal plus-minus weighted zero-sum sequence?
\end{itemize}
All these integers have been called ``{\it weighted Davenport constant}\,'' and were denoted as $\mathsf{D}_{\pm}(G_0)$ or as $\mathsf{d}_{\pm}(G_0)$ or similarly. The constant addressed in (c) fits into the general concept of a ``Davenport constant of a monoid, embedded in a free abelian monoid'', as introduced in \cite{Cz-Do-Ge16a} and further used in \cite{B-M-O-S22}. Since in the present paper, we do not need constants fulfilling properties (a) and (b), we use the shorthand notation $\mathsf{D}_{\pm}(G_0)$ for the Davenport constant of the monoid $\mathcal{B}_{\pm}(G_0)$, whence we have
\[
\mathsf{D}_{\pm}(G_0)=\mathsf{D}\big(\mathcal{B}_{\pm}(G_0)\big)\,,
\]
where the latter notation is used in \cite{B-M-O-S22,Cz-Do-Ge16a}.

\smallskip
\section{Characterizations of Ideal Theoretic Properties}\label{3}
\smallskip

In this section we study algebraic properties of the monoid $\mathcal{B}_{\pm}(G)$. We characterize when it is Mori or Krull (equivalently, completely integrally closed resp. root closed) or finitely generated (equivalently, a C-monoid) (Theorems~\ref{3.4},~\ref{3.7}, and Corollary~\ref{3.5}).

\smallskip
\begin{lemma}\label{3.1}
Let $G$ be an abelian group.
\begin{enumerate}
\item If $|G|\le 2$, then
\[
\mathcal{B}(G)=\mathcal{B}_{\pm}(G)\cong\mathcal{F}(G)\cong (\N_0^{|G|},+)\,.
\]
\item The following statements are equivalent.
\begin{enumerate}
\item $|G|\le 2$.
\item $\mathcal{B}_{\pm}(G)$ is factorial.
\item $\mathcal{B}_{\pm}(G)$ is half-factorial.
\end{enumerate}
\end{enumerate}
\end{lemma}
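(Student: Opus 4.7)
Part (1) I would prove by direct inspection. The case $|G|=1$ is immediate since every sequence is trivially a zero-sum. For $G=\{0,g\}$ with $2g=0$, the key observation is that $-g=g$, so every signed sum of a sequence $S=0^a g^b$ collapses to $bg$; hence $\mathcal{B}(G)=\mathcal{B}_\pm(G)$ consists precisely of those $S$ with $b$ even, and the map $0^a g^{2b}\mapsto(a,b)$ furnishes the isomorphism with $(\N_0^2,+)$, while $0^a g^b\mapsto(a,b)$ yields $\mathcal{F}(G)\cong(\N_0^2,+)$.

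For part (2), the implications (a)$\Rightarrow$(b) and (b)$\Rightarrow$(c) are essentially free: the first follows from (1) since free abelian monoids are factorial, and the second is the standard fact recorded in Section~\ref{2}. The substance is (c)$\Rightarrow$(a), which I would establish by contraposition: assuming $|G|\ge 3$, I will exhibit a single element of $\mathcal{B}_\pm(G)$ whose set of lengths contains both $2$ and $3$.

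The plan for the contrapositive splits along whether $G$ has an element of order $\ge 3$. In that case, pick such a $g$; then $g$ and $2g$ are distinct nonzero elements, and the identity $g+g-2g=0$ produces the candidate atom $U:=g^2\cdot 2g$ of length $3$. I would verify that $g^2$, $(2g)^2$, and $U$ are all atoms of $\mathcal{B}_\pm(G)$: the first two reduce to the trivial fact that for $h\ne 0$ the only nontrivial zero-sum subsequence of $h^2$ is $h^2$ itself, and for $U$ one inspects the four proper nonempty subsequences $g,g^2,2g,g\cdot 2g$, noting that only $g^2$ is always a plus-minus zero-sum (and $g\cdot 2g$ is one only when $\ord(g)=3$, since $\sigma_\pm(g\cdot 2g)=\{\pm g,\pm 3g\}$), but in each case the complementary factor ($2g$ or $g$) is not a zero-sum, ruling out any nontrivial factorization. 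Then $g^4(2g)^2$ factors both as $(g^2)(g^2)((2g)^2)$ and as $U\cdot U$, putting $\{2,3\}\subseteq\mathsf{L}(g^4(2g)^2)$.

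Otherwise $G$ is an elementary abelian $2$-group of order $\ge 4$, and I would work with two $\F_2$-independent elements $e_1,e_2$. Since $-h=h$ for all $h\in G$, plus-minus weighted zero-sums coincide with classical zero-sums, which trivialises the atom check for $e_1^2$, $e_2^2$, $(e_1+e_2)^2$, and $V:=e_1\cdot e_2\cdot(e_1+e_2)$; then $e_1^2\cdot e_2^2\cdot(e_1+e_2)^2=V\cdot V$ gives factorizations of lengths $3$ and $2$. The main obstacle is precisely that the Case~1 construction degenerates when $\ord(g)=2$ (since then $2g=0$ and $g^2\cdot 2g=g^2\cdot 0$ is no longer an atom), forcing the separate treatment via two independent order-$2$ generators in the elementary $2$-group case.
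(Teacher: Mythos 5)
Your proof is correct and follows essentially the same strategy as the paper: part (1) by direct inspection, the easy implications for free, and for (c)$\Rightarrow$(a) the exhibition of a length-$3$ atom $U$ so that $U^2$ admits factorizations of lengths $2$ and $3$. The only difference is cosmetic: the paper splits into four cases (odd order $\ge 3$ using $g^{\ord(g)}$, infinite order, two order-two elements, order $4$), whereas you observe that $U=g^2(2g)$ works uniformly for any $g$ of order $\ge 3$, which merges three of those cases and is a mild tidying of the same argument.
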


\begin{proof}
1. Obvious.

2. (a) $\Longrightarrow$ (b) This follows from 1.

(b) $\Longrightarrow$ (c) Obvious.

(c) $\Longrightarrow$ (a) We suppose that $|G|\ge 3$ and show that $\mathcal{B}_{\pm}(G)$ is not half-factorial. To do so, it is sufficient to find some atom $U=g_1\cdot\ldots\cdot g_{\ell}\in\mathcal{B}_{\pm}(G)$ with $|U|\ge 3$. Then we have $U^2=(g_1^2)\cdot\ldots\cdot (g_{\ell}^2)$.

If there is an element $g\in G$ with $\ord(g)=n\ge 3$ odd, then $U=g^n$ is an atom. If there is an element $g\in G$ with $\ord(g)=\infty$, then $U=g^2(2g)$ is an atom. If there are two distinct elements $e_1,e_2$ of order two, then $U=e_1e_2(e_1+e_2)$ is an atom. If none of these conditions hold, then $G$ has an element $g$ with $\ord(g)=4$, whence $U=g^2(2g)$ is an atom.
\end{proof}

\smallskip
\begin{theorem}\label{3.2}
Let $G$ be an abelian group.
\begin{enumerate}
\item $\widetilde{\mathcal{B}_{\pm}(G)}=\widehat{\mathcal{B}_{\pm}(G)}$ is a Krull monoid.
\item If $|G|\ne 2$, then the inclusion $\widetilde{\mathcal{B}_{\pm}(G)}\hookrightarrow\mathcal{F}(G)$ is a divisor theory. Its class group is isomorphic to a factor group of $G$ and every class contains a prime divisor.
\end{enumerate}
\end{theorem}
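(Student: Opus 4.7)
The proof rests on an explicit description of $\widetilde{\mathcal{B}_\pm(G)}$. Define $\varphi\colon\mathcal{F}(G)\to G/2G$ by $\varphi(S)=\sigma(S)+2G$, and write $\tilde\varphi$ for its extension to $\mathsf{q}(\mathcal{F}(G))$. Two observations drive the argument. First, $\mathcal{B}_\pm(G)\subset\ker\varphi$: if $\sum\varepsilon_i g_i=0$ with $\varepsilon_i\in\{\pm 1\}$, then $\sigma(S)\equiv\sum\varepsilon_i g_i\equiv 0\pmod{2G}$, since $\varepsilon_i\equiv 1\pmod 2$. Second, $S^2\in\mathcal{B}_\pm(G)$ for every $S\in\mathcal{F}(G)$, by pairing the two copies of each term with opposite signs. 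Together with the root-closedness of $\mathcal{F}(G)$ in $\mathsf{q}(\mathcal{F}(G))$ (it is factorial), these will yield
\[
\widetilde{\mathcal{B}_\pm(G)}\ =\ \{S\in\mathcal{F}(G)\colon\sigma(S)\in 2G\}\,.
\]
For $\subset$: any $x\in\widetilde{\mathcal{B}_\pm(G)}$ has a power in $\mathcal{F}(G)$ hence lies in $\mathcal{F}(G)$, and satisfies $\tilde\varphi(x)=0$ since $x\in\mathsf{q}(\mathcal{B}_\pm(G))\subset\ker\tilde\varphi$. For $\supset$, given $S$ with $\sigma(S)=2h$, both $h^2$ and $Sh^2$ lie in $\mathcal{B}_\pm(G)$ (assign $+1$ on $S$ and $-1,-1$ on $h^2$), so $S=(Sh^2)/h^2\in\mathsf{q}(\mathcal{B}_\pm(G))$; combined with $S^2\in\mathcal{B}_\pm(G)$, this places $S$ in $\widetilde{\mathcal{B}_\pm(G)}$. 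Rewriting $S/T$ as $(ST)/T^2$ gives analogously $\mathsf{q}(\widetilde{\mathcal{B}_\pm(G)})=\ker\tilde\varphi$.

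Part~1 is then immediate. The display reads $\widetilde{\mathcal{B}_\pm(G)}=\mathcal{F}(G)\cap\mathsf{q}(\widetilde{\mathcal{B}_\pm(G)})$, so the inclusion $\widetilde{\mathcal{B}_\pm(G)}\hookrightarrow\mathcal{F}(G)$ is a divisor homomorphism by the criterion recorded in Section~\ref{2}; hence $\widetilde{\mathcal{B}_\pm(G)}$ is a Krull monoid (a saturated submonoid of a free abelian monoid). The equality $\widehat{\mathcal{B}_\pm(G)}=\widetilde{\mathcal{B}_\pm(G)}$ then follows: the inclusion $\widetilde{\mathcal{B}_\pm(G)}\subset\widehat{\mathcal{B}_\pm(G)}$ is general, while any $x\in\widehat{\mathcal{B}_\pm(G)}$ lies in $\mathsf{q}(\mathcal{B}_\pm(G))=\mathsf{q}(\widetilde{\mathcal{B}_\pm(G)})$ and admits some $c\in\mathcal{B}_\pm(G)\subset\widetilde{\mathcal{B}_\pm(G)}$ with $cx^n\in\widetilde{\mathcal{B}_\pm(G)}$ for all $n$, forcing $x\in\widetilde{\mathcal{B}_\pm(G)}$ by the complete integral closedness of the Krull monoid $\widetilde{\mathcal{B}_\pm(G)}$.

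For Part~2 I must upgrade the divisor homomorphism to a divisor theory, i.e.\ realise each prime $g\in G$ of $\mathcal{F}(G)$ as a gcd of finitely many elements of $\widetilde{\mathcal{B}_\pm(G)}$. For $g=0$, the length-one sequence $0$ itself lies in $\mathcal{B}_\pm(G)\subset\widetilde{\mathcal{B}_\pm(G)}$. For $g\ne 0$, the hypothesis $|G|\ne 2$ forces $|G|\ge 3$, so I may choose $h\in G\setminus\{0,g\}$; then $g,h,g+h$ are pairwise distinct, and the sequences $A_1=g^2$ and $A_2=g\cdot h\cdot(g+h)$ both have sums in $2G$ and satisfy $\gcd(A_1,A_2)=g$. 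The class group is
\[
\mathsf{q}(\mathcal{F}(G))\big/\mathsf{q}(\widetilde{\mathcal{B}_\pm(G)})\ =\ \mathsf{q}(\mathcal{F}(G))\big/\ker\tilde\varphi\ \cong\ G/2G\,,
\]
a factor group of $G$, and each class $g+2G$ contains the prime divisor $g\in G\subset\mathcal{F}(G)$.

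All the real content sits in the two observations of the first paragraph; thereafter, everything is standard Krull-monoid bookkeeping using the tools from Section~\ref{2}. The one mildly delicate step is supplying the auxiliary sequence $g\cdot h\cdot(g+h)$, which simultaneously has sum in $2G$ and separates $g$ from the remaining primes of $\mathcal{F}(G)$ — and this is precisely where the hypothesis $|G|\ne 2$ is used.
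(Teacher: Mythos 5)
Your proof is correct, and while it shares the two decisive ingredients with the paper's argument --- the saturation criterion $H=\mathsf{q}(H)\cap\mathcal{F}(G)$ and the observation that $S^2\in\mathcal{B}_{\pm}(G)$ for every $S\in\mathcal{F}(G)$ --- it packages them differently and, in part, more explicitly. For Part~1 the paper works with the abstractly defined monoid $\mathcal{B}_{\pm}^*(G)=\mathsf{q}(\mathcal{B}_{\pm}(G))\cap\mathcal{F}(G)$, shows it is saturated in $\mathcal{F}(G)$ by a purely formal quotient-group argument, and then squeezes $\widetilde{\mathcal{B}_{\pm}(G)}=\widehat{\mathcal{B}_{\pm}(G)}=\mathcal{B}_{\pm}^*(G)$ exactly as you do; your identification of this monoid as $\sigma^{-1}(2G)\cap\mathcal{F}(G)$ is a concrete description the paper never writes down, and it buys you the precise class group $G/2G$ (together with the fact that each class $g+2G$ contains the prime divisor $g$), where the paper only records ``a factor group of $G$'' via the isomorphism $G\big/\bigl(\mathsf{q}(\mathcal{B}_{\pm}(G))/\mathsf{q}(\mathcal{B}(G))\bigr)$. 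For Part~2 the routes genuinely diverge: the paper imports the known divisor theory $\mathcal{B}(G)\hookrightarrow\mathcal{F}(G)$ (valid for $|G|\neq 2$) and uses $\mathcal{B}(G)\subset\widetilde{\mathcal{B}_{\pm}(G)}$ to conclude that every $S\in\mathcal{F}(G)$ is a gcd of elements of the larger monoid, whereas you realize each prime $g$ directly as $\gcd\bigl(g^2,\;g\cdot h\cdot(g+h)\bigr)$ with $h\in G\setminus\{0,g\}$ --- an elementary, self-contained construction that isolates exactly where $|G|\neq 2$ enters (your implicit reduction from ``every prime is a gcd'' to ``every element is a gcd'' is the standard valuation-wise computation in a free abelian monoid, so no gap there). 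In short: same skeleton for Part~1, a more explicit and slightly sharper conclusion overall, and an independent, citation-free argument for the divisor-theory property in Part~2.
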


\begin{proof}
If $|G|\le 2$, then all statements hold true by Lemma~\ref{3.1}. Thus, we suppose that $|G|\ge 3$.
We set
\[
\mathcal{B}_{\pm}^*(G)=\mathsf{q}\big(\mathcal{B}_{\pm}(G)\big)\cap\mathcal{F}(G)\,.
\]
Since
\[
\mathcal{B}_{\pm}(G)\subset\mathcal{B}_{\pm}^*(G)\quad\text{and}\quad\mathcal{B}_{\pm}^*(G)\subset\mathsf{q}\big(\mathcal{B}_{\pm}(G)\big)\,,
\]
it follows that
\[
\mathsf{q}\big(\mathcal{B}_{\pm}(G)\big)\subset\mathsf{q}\big(\mathcal{B}_{\pm}^*(G)\big)\subset\mathsf{q}\big(\mathcal{B}_{\pm}(G)\big)\,,
\]
This implies that
\[
\mathcal{B}_{\pm}^*(G)=\mathsf{q}\big(\mathcal{B}_{\pm}^*(G)\big)\cap\mathcal{F}(G)\,,
\]
whence $\mathcal{B}_{\pm}^*(G)\hookrightarrow\mathcal{F}(G)$ is a divisor homomorphism and $\mathcal{B}_{\pm}^*(G)$ is a Krull monoid. Thus, $\mathcal{B}_{\pm}^*(G)$ is completely integrally closed, which implies that
\[
\widetilde{\mathcal{B}_{\pm}(G)}\subset\widehat{\mathcal{B}_{\pm}(G)}\subset\mathcal{B}_{\pm}^*(G)\,.
\]
If $S\in\mathsf{q}\big(\mathcal{B}_{\pm}(G)\big)\cap\mathcal{F}(G)$, then $S^2\in\mathcal{B}_{\pm}(G)$, whence
\[
\mathsf{q}\big(\mathcal{B}_{\pm}(G)\big)\cap\mathcal{F}(G)=\mathcal{B}_{\pm}^*(G)\subset\widetilde{\mathcal{B}_{\pm}(G)}
\]
and thus it follows that
\[
\widetilde{\mathcal{B}_{\pm}(G)}=\widehat{\mathcal{B}_{\pm}(G)}=\mathcal{B}_{\pm}^*(G)\,.
\]
By \cite[Proposition 2.5.6]{Ge-HK06a}, the inclusion $\mathcal{B}(G)\hookrightarrow\mathcal{F}(G)$ is a divisor theory with class group
\[
\mathsf{q}\big(\mathcal{F}(G)\big)\big/\mathsf{q}\big(\mathcal{B}(G)\big)\cong G
\]
and every class contains precisely one prime divisor.
Thus, every $S\in\mathcal{F}(G)$ is a greatest common divisor of elements from $\mathcal{B}(G)$ and hence it is a greatest common divisor of elements from $\widetilde{\mathcal{B}_{\pm}(G)}$. Therefore the inclusion $\widetilde{\mathcal{B}_{\pm}(G)}\hookrightarrow\mathcal{F}(G)$ is a divisor theory with class group
\[
\mathsf{q}\big(\mathcal{F}(G)\big)\big/\mathsf{q}\big(\mathcal{B}_{\pm}(G)\big)\cong\mathsf{q}\big(\mathcal{F}(G)\big)\big/\mathsf{q}\big(\mathcal{B}(G)\big)\Big/\mathsf{q}\big(\mathcal{B}_{\pm}(G)\big)\big/\mathsf{q}\big(\mathcal{B}(G)\big)\cong G\Big/\mathsf{q}\big(\mathcal{B}_{\pm}(G))\big/\mathsf{q}\big(\mathcal{B}(G)\big)\,.\qedhere
\]
\end{proof}

\smallskip
\begin{lemma}\label{3.3}
Let $G$ be an abelian group and let $g\in G$ with $\ord(g)=\infty$. For every $n\in\mathbb{N}$, let $S_n=g\left(2^{n+1}g\right)^2\left(\left(2^{n+2}-1\right)g\right)$ and $a_n=(2g)\prod_{j=1}^n\left((3\cdot 2^j)g\right)$.
\begin{enumerate}
\item For every $n\in\mathbb{N}$, $S_n\in\mathcal{B}_{\pm}(G)$, $a_n\in\mathsf{q}\big(\mathcal{B}_{\pm}(G)\big)$, $a_nS_{n+1}\not\in\mathcal{B}_{\pm}(G)$, and $a_n S_i\in\mathcal{B}_{\pm}(G)$ for each $i\in [1,n]$.
\item $\mathcal{B}_{\pm}(G)$ is not a Mori monoid.
\end{enumerate}
\end{lemma}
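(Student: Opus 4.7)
The plan is to verify the four assertions in part~(1) by explicit construction, and then deduce part~(2) by the standard descending-chain trick for dual ideals.

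First, $S_n\in\mathcal{B}_{\pm}(G)$ is witnessed by the signs $(+,-,-,+)$, giving $1-2^{n+1}-2^{n+1}+(2^{n+2}-1)=0$. The membership $a_n\in\mathsf{q}(\mathcal{B}_{\pm}(G))$ then follows from $a_n=(a_nS_1)S_1^{-1}$, once $a_nS_1\in\mathcal{B}_{\pm}(G)$ is established. For $a_nS_i\in\mathcal{B}_{\pm}(G)$ with $i\in[1,n]$, I would write the signed sum of $a_nS_i$ as
\[
\varepsilon_0+2^{i+1}(\varepsilon_1+\varepsilon_2)+\varepsilon_3(2^{i+2}-1)+2\delta_0+3\sum_{j=1}^{n}\delta_j\,2^j,
\]
take $\varepsilon_1=-\varepsilon_2$ together with $\varepsilon_0=\varepsilon_3=-1$ so the $S_i$-contribution collapses to $-2^{i+2}$, and then solve $2\delta_0+3\sum_{j=1}^n\delta_j\,2^j=2^{i+2}$. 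Choosing $\delta_0\in\{\pm 1\}$ so that $2^{i+2}-2\delta_0$ is divisible by $3$ (feasible for either parity of $i$) yields a target $(2^{i+2}-2\delta_0)/3$ that is congruent to $2\pmod 4$ and bounded in absolute value by $2^{n+1}-2$ whenever $i\le n$. A short elementary computation (e.g.\ via the substitution $\delta_j=2\eta_j-1$ with $\eta_j\in\{0,1\}$) then shows that $\sum_{j=1}^n\delta_j\,2^j$ realizes \emph{every} integer congruent to $2\pmod 4$ in the range $[-(2^{n+1}-2),\,2^{n+1}-2]$, so suitable signs $\delta_1,\ldots,\delta_n$ exist.

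The crux is the non-membership $a_nS_{n+1}\notin\mathcal{B}_{\pm}(G)$. I would split on the sign pattern of the three large terms of $S_{n+1}$. If $\varepsilon_1=-\varepsilon_2$, or if $\varepsilon_1=\varepsilon_2=\varepsilon_3$, then a single surviving term has magnitude at least $2^{n+3}-1$, while the remaining contributions are absolutely bounded by $1+2+3(2^{n+1}-2)=3\cdot 2^{n+1}-3$, which is strictly smaller, so the signed sum cannot vanish. In the only remaining case $\varepsilon_1=\varepsilon_2=-\varepsilon_3$, the large terms cancel down to
\[
\varepsilon_0+\varepsilon_1+2\delta_0+3\sum_{j=1}^{n}\delta_j\,2^j=0,
\]
forcing $3\sum_{j=1}^{n}\delta_j\,2^j\in\{-4,-2,0,2,4\}$; divisibility by $3$ leaves only the possibility $\sum_{j=1}^{n}\delta_j\,2^j=0$, which is impossible since the leading term $\delta_n\,2^n$ exceeds in absolute value the sum of the remaining ones.

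Finally, part~(2) is a formal consequence of (1). For every $n$, the first three assertions yield $a_n\in(S_1,\ldots,S_n)^{-1}$, whereas the fourth gives $a_n\notin(S_1,\ldots,S_{n+1})^{-1}$, so the divisorial fractional ideals $\bigl((S_1,\ldots,S_n)^{-1}\bigr)_{n\ge 1}$ form a strictly descending chain. Dualizing, and using that $I\mapsto I^{-1}$ is an inclusion-reversing bijection on divisorial ideals, produces a strictly ascending chain of divisorial $v$-ideals $(S_1,\ldots,S_n)_v$, contradicting the ascending chain condition; hence $\mathcal{B}_{\pm}(G)$ is not a Mori monoid. I expect the sign analysis in (1), especially the case-by-case refutation for $a_nS_{n+1}$, to be the main technical hurdle; once (1) is in hand, the ideal-theoretic step in (2) is routine.
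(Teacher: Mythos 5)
Your proposal is correct and follows essentially the same route as the paper: explicit sign choices establish the memberships, a triangle-inequality case analysis on the signs of the three large terms of $S_{n+1}$ rules out $a_nS_{n+1}\in\mathcal{B}_{\pm}(G)$, and the strictly descending chain of ideal quotients $(\mathcal{B}_{\pm}(G)\colon\{S_1,\ldots,S_n\})$ dualizes to a strictly ascending chain of divisorial ideals, contradicting the Mori property. The only cosmetic difference is that for $a_nS_i\in\mathcal{B}_{\pm}(G)$ you show that $\sum_{j=1}^n\delta_j2^j$ realizes every integer $\equiv 2\pmod 4$ in $[-(2^{n+1}-2),\,2^{n+1}-2]$ and check the target lies in that set, whereas the paper exhibits one explicit alternating sign pattern; both are valid.
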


\begin{proof}
Let $Q$ be the quotient group of $\mathcal{B}_{\pm}(G)$.

1. Let $n\in\mathbb{N}$. Since $1-2^{n+1}-2^{n+1}+(2^{n+2}-1)=0$, we have that $g+(-1)(2^{n+1}g)+(-1)(2^{n+1}g)+(2^{n+2}-1)g=0$, and thus $S_n\in\mathcal{B}_{\pm}(G)$. Next we show that for each $z\in 2\mathbb{Z}$, $zg\in Q$. Let $z\in 2\mathbb{Z}$. Then $(\frac{z}{2}g)^2,(zg)(\frac{z}{2}g)^2\in\mathcal{B}_{\pm}(G)$, and hence $zg=\frac{(zg)(\frac{z}{2}g)^2}{(\frac{z}{2}g)^2}\in Q$. Consequently, $a_n\in Q$.

\smallskip
Now we prove that $a_nS_{n+1}\not\in\mathcal{B}_{\pm}(G)$. Assume that $a_nS_{n+1}\in\mathcal{B}_{\pm}(G)$. Then there are some $\varepsilon,(\varepsilon_j)_{j=1}^n$ and $(\eta_i)_{i=1}^4$ such that $\varepsilon,\varepsilon_j,\eta_i\in\{-1,1\}$ for each $j\in [1,n]$ and $i\in [1,4]$ and $\varepsilon(2g)+\sum_{j=1}^n\varepsilon_j((3\cdot 2^j)g)+\eta_1g+(\eta_2+\eta_3)(2^{n+2}g)+\eta_4((2^{n+3}-1)g)=0$. Since $\ord(g)=\infty$, it follows that $2\varepsilon+\sum_{j=1}^n 3\varepsilon_j2^j+\eta_1+(\eta_2+\eta_3)2^{n+2}+\eta_4(2^{n+3}-1)=0$. Without restriction, we can assume that $\eta_4=1$. Suppose that $\eta_3=1$. Then $2^{n+2}+2^{n+3}-1=|2\varepsilon+\sum_{j=1}^n 3\varepsilon_j2^j+\eta_1+\eta_2 2^{n+2}|\leq 2+3\sum_{j=1}^n 2^j+1+2^{n+2}=2^{n+2}+3\sum_{j=0}^n 2^j=2^{n+2}+3(2^{n+1}-1)=2^{n+2}+3\cdot 2^{n+1}-3<2^{n+2}+2^{n+3}-1$, a contradiction. Consequently, $\eta_3=-1$. It follows by analogy that $\eta_2=-1$. Therefore $2\varepsilon+\sum_{j=1}^n 3\varepsilon_j2^j+\eta_1-1=0$, and hence $3\cdot 2^n=|2\varepsilon+\sum_{j=1}^{n-1} 3\varepsilon_j2^j+\eta_1-1|\leq 2+\sum_{j=1}^{n-1} 3\cdot 2^j+1+1=1+3\sum_{j=0}^{n-1} 2^j=1+3(2^n-1)=3\cdot 2^n-2<3\cdot 2^n$, a contradiction.

\smallskip
Let $i\in [1,n]$. Note that $2(-1)^{i+1}+\sum_{j=1}^{i-1} 3(-1)^{i+1-j}2^j-\sum_{j=i}^{n-1} 3\cdot 2^j+3\cdot 2^n-1+2^{i+1}-2^{i+1}-(2^{i+2}-1)=2(-1)^{i+1}+3(-1)^{i+1}\sum_{j=1}^{i-1} (-2)^j-3\sum_{j=i}^{n-1} 2^j+3\cdot 2^n-2^{i+2}=(-1)^{i+1}(2+3(\frac{(-2)^i-1}{-2-1}-1))-3(\frac{2^n-1}{2-1}-\frac{2^i-1}{2-1})+3\cdot 2^n-2^{i+2}=(-1)^{i+1}(2-((-2)^i+2))-3(2^n-2^i)+3\cdot 2^n-2^{i+2}=(-1)^{i+2}(-2)^i-2^i=(-1)^{2i+2}2^i-2^i=0$. We infer that $(-1)^{i+1}(2g)+\sum_{j=1}^{i-1} (-1)^{i+1-j}((3\cdot 2^j)g)+\sum_{j=i}^{n-1} (-1)((3\cdot 2^j)g)+(3\cdot 2^n)g+(-1)g+2^{i+1}g+(-1)(2^{i+1}g)+(-1)((2^{i+2}-1)g)=0$. Therefore $a_nS_i\in\mathcal{B}_{\pm}(G)$.

\bigskip
2. It follows from 1. that for each $n\in\mathbb{N}$, $(\mathcal{B}_{\pm}(G)\colon\{S_i\colon i\in [1,n]\})\supsetneq (\mathcal{B}_{\pm}(G)\colon\{S_i\colon i\in [1,n+1]\})$, and hence $(\{S_i\colon i\in [1,n]\})_v\subsetneq (\{S_i\colon i\in [1,n+1]\})_v$ for each $n\in\mathbb{N}$. Therefore $\mathcal{B}_{\pm}(G)$ is not a Mori monoid.
\end{proof}

Let $H$ and $D$ be monoids and let $\varphi\colon H\rightarrow D$ be a divisor homomorphism. Note that $H$ is seminormal if and only if for each $x\in\mathsf{q}(H)$ with $x^2,x^3\in H$, we have that $x\in H$ (e.g. see \cite{Ge-Ka-Re15a}). Moreover, it follows from \cite[Lemma 3.2.2]{Ge-Ka-Re15a} that $H\times D$ is seminormal if and only if $H$ and $D$ are seminormal. Finally, if $D$ is seminormal, then $H$ is seminormal. (This can be proved along similar lines as \cite[Lemma 3.2.4]{Ge-Ka-Re15a}.) We use these facts about seminormality without further mention.

\smallskip
\begin{theorem}\label{3.4}
Let $G$ be an abelian group. Then the following statements are equivalent.
\begin{enumerate}
\item[(a)] $\mathcal{B}_{\pm}(G)$ is a Mori monoid.
\item[(b)] $(\mathcal{B}_{\pm}(G)\colon\widehat{\mathcal{B}_{\pm}(G)})\not=\emptyset$.
\item[(c)] $2G$ is finite.
\item[(d)] $G=G_1\oplus G_2$, where $G_1$ is an elementary $2$-group and $G_2$ is a finite group.
\end{enumerate}
If these equivalent conditions are satisfied, then $\mathcal{B}_{\pm}(G)$ is seminormal if and only if ${\rm exp}(G)\mid 4$.
\end{theorem}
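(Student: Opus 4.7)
The plan is to prove the four-way equivalence by the cycle $(a) \Rightarrow (c) \Leftrightarrow (d) \Rightarrow (b) \Rightarrow (a)$ and then handle the seminormality addendum under the standing hypothesis. The equivalence $(c) \Leftrightarrow (d)$ is purely group-theoretic: if $G$ has an element $g$ of infinite order then $\{2g,4g,\ldots\}$ witnesses $2G$ infinite, so (c) forces $G$ torsion. Decomposing $G=\bigoplus_p G_p$ into primary components, multiplication by $2$ is an automorphism of $G_p$ for every odd prime $p$, which bounds the odd part; for the $2$-primary part, finiteness of $2G_2$ forces bounded exponent, and Pr\"ufer's theorem applied to $G_2 = \bigoplus_i C_{2^{e_i}}$ shows only finitely many $e_i$ can exceed $1$, yielding the required splitting. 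The converse is immediate from $2G = 2G_2$.

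For $(a) \Rightarrow (c)$ I argue the contrapositive. If $G$ has an element of infinite order $g$, then $\mathcal{B}_{\pm}(\langle g\rangle)$ is a divisor-closed submonoid of $\mathcal{B}_{\pm}(G)$, so the non-Mori behaviour established in Lemma~\ref{3.3} transfers to $\mathcal{B}_{\pm}(G)$. If $G$ is torsion with $2G$ infinite, the structural analysis above embeds either $\bigoplus_{n \ge 1} C_p$ (some odd prime $p$) or $\bigoplus_{n \ge 1} C_{2^k}$ (some $k \ge 2$) into $G$; I would build analogues of the pairs $(S_n,a_n)$ from Lemma~\ref{3.3} supported on independent generators of this infinite sum so as to produce a strict chain $(\{S_i \colon i \in [1,n]\})_v \subsetneq (\{S_i \colon i \in [1,n+1]\})_v$. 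I expect this torsion construction to be the main technical obstacle: the doubling arithmetic of Lemma~\ref{3.3} does not port verbatim, and the replacement must exploit independence of successive coordinates to block the sign equation for $a_n S_{n+1}$ while keeping $a_n S_i \in \mathcal{B}_{\pm}(G)$ for $i \le n$.

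For $(d) \Rightarrow (b) \Rightarrow (a)$ I first pin down $\widehat{\mathcal{B}_{\pm}(G)}$ explicitly. Since $S^2 \in \mathcal{B}_{\pm}(G)$ for every $S \in \mathcal{F}(G)$ (take opposite signs on the two copies), $\mathsf{q}(\mathcal{B}_{\pm}(G))/\mathsf{q}(\mathcal{B}(G))$ contains $2G$; combined with the obvious inclusion $\sigma(\mathcal{B}_{\pm}(G)) \subset 2G$, the class group in Theorem~\ref{3.2} is identified as $G/2G$ and
\[
\widehat{\mathcal{B}_{\pm}(G)} = \{S \in \mathcal{F}(G) \colon \sigma(S) \in 2G\}.
\]
Under (d), $2G \subset G_2$, so $S \in \widehat{\mathcal{B}_{\pm}(G)}$ automatically has vanishing $G_1$-sum; since signs on $G_1$-terms are irrelevant, $S \in \mathcal{B}_{\pm}(G)$ reduces to its $G_2$-projection $T_S$ lying in $\mathcal{B}_{\pm}(G_2)$, and $T_S \in \widehat{\mathcal{B}_{\pm}(G_2)}$ by the same characterization applied in $G_2$. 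Finiteness of $G_2$ makes $\mathcal{B}_{\pm}(G_2)$ finitely generated, so the conductor $(\mathcal{B}_{\pm}(G_2) \DP \widehat{\mathcal{B}_{\pm}(G_2)})$ is nonempty; any element $C$ from it, viewed inside $\mathcal{B}_{\pm}(G)$, satisfies $C \cdot \widehat{\mathcal{B}_{\pm}(G)} \subset \mathcal{B}_{\pm}(G)$. The implication $(b) \Rightarrow (a)$ is then the standard fact that if $\widehat{H}$ is Krull (given here by Theorem~\ref{3.2}) and the conductor is nonempty, then $H$ is Mori.

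For the seminormality statement I use the criterion that $H$ is seminormal iff $x \in \mathsf{q}(H)$ with $x^2,x^3 \in H$ forces $x \in H$; since squares are automatic, this reduces to showing $S^3 \in \mathcal{B}_{\pm}(G) \Rightarrow S \in \mathcal{B}_{\pm}(G)$ for $S \in \widehat{\mathcal{B}_{\pm}(G)}$. If $\exp(G) \mid 4$, then $2G \subset G[2]$, so modulo $G[2]$ the set $P_S$ of subset sums of $S$ is a subgroup of $G/G[2]$: closure under addition uses that the carry term $2\sum_{i \in I_1 \cap I_2} g_i$ lies in $2G \subset G[2]$, and each element of $G/G[2]$ is self-inverse for the same reason, while $0 \in P_S$ provides the identity. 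Writing $\sigma(S) = 2q$ and expressing $S^3 \in \mathcal{B}_{\pm}(G)$ as a relation $a_1 + a_2 + a_3 = 0$ with $a_j = \sigma(S) - 2p_j$, $p_j \in P_S$, one finds $\bar{p}_1 + \bar{p}_2 + \bar{p}_3 = \bar{q}$ in $G/G[2]$, so $\bar{q}$ lies in $\overline{P_S}$ and hence $\sigma(S) = 2p$ for some $p \in P_S$, i.e., $0 \in \sigma_{\pm}(S)$. For the converse, if $\exp(G) \nmid 4$ then $G$ contains either $C_p$ (some odd prime $p$) or $C_{2^k}$ (some $k \ge 3$), and explicit witnesses such as $S = g \cdot 3g$ in $C_{2^k}$, or $S = g^j$ with $j$ odd and $\lceil p/3 \rceil \le j < p$ in $C_p$, provide elements $S \in \widehat{\mathcal{B}_{\pm}(G)} \setminus \mathcal{B}_{\pm}(G)$ with $S^2, S^3 \in \mathcal{B}_{\pm}(G)$, defeating seminormality.
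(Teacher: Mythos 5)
Your outline has two genuine gaps, both at the heart of the theorem. First, in $(a)\Rightarrow(c)$ for the torsion case you defer the key construction (``I would build analogues of $(S_n,a_n)$''), and the structural reduction you propose to set it up is false: a torsion group with $2G$ infinite need not contain $\bigoplus_{n\ge1}C_p$ or $\bigoplus_{n\ge1}C_{2^k}$ --- the Pr\"ufer group $\mathbb{Z}(2^{\infty})$ satisfies $2G=G$ infinite although every proper subgroup is finite cyclic, and $\bigoplus_{p}C_p$ over infinitely many odd primes contains no infinite sum of copies of a single cyclic group. Pr\"ufer's theorem applies only to bounded groups, which $G$ need not be before (c) is established. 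The paper avoids any such reduction: it picks $e_0$ with $2e_0\ne 0$ and inductively $e_{i+1}$ with $2e_{i+1}\notin\langle e_0,\ldots,e_i\rangle$ (possible precisely because $\langle e_0,\ldots,e_i\rangle$ is finite while $2G$ is not), and then exhibits the strictly ascending chain of divisorial ideals via $S_n=(2e_0)(e_0+e_n)(e_0-e_n)$ and $a_n=(2e_0)^{-1}\prod_{i=1}^n e_i^2$. That construction is the content of the implication and is absent from your proposal.

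Second, your $(b)\Rightarrow(a)$ rests on the claim that $\widehat H$ Krull together with $(H\DP\widehat H)\ne\emptyset$ forces $H$ to be Mori. This is not a citable fact, and its ring-theoretic analogue fails: for $R=k+XK[X]$ with $[K:k]=\infty$ one has $\widehat R=K[X]$ Krull and conductor $XK[X]\ne 0$, yet $R$ is not Mori. Since you route $(d)\Rightarrow(a)$ through (b), you never actually prove that $\mathcal{B}_{\pm}(G)$ is Mori when $G=G_1\oplus G_2$; the paper does this by constructing an explicit divisor homomorphism $\mathcal{B}_{\pm}(G)\to\mathcal{F}(G)\times\mathcal{B}(G_1)\times\mathcal{B}_{\pm}(G_2)$ and pulling the Mori property back along it, and it obtains $(b)\Rightarrow(c)$ separately by noting that $2G\subset\widehat{\mathcal{B}_{\pm}(G)}$, so a conductor element $S$ satisfies $2G\subset\sigma_{\pm}(S)$, a finite set. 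On the positive side, your $(c)\Leftrightarrow(d)$, your identification $\widehat{\mathcal{B}_{\pm}(G)}=\{S\in\mathcal{F}(G)\colon\sigma(S)\in 2G\}$, your $(d)\Rightarrow(b)$, and your seminormality argument are all sound; the last of these (the subgroup-of-subset-sums argument in $G/G[2]$ together with the witnesses $g(3g)$ and $g^j$) is a genuinely different, self-contained route, whereas the paper reduces to $\mathcal{B}_{\pm}(G_2)$ via divisor homomorphisms and cites the known characterization of seminormality for finite groups.
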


\begin{proof}
Let $Q$ be the quotient group of $\mathcal{B}_{\pm}(G)$.

\smallskip
(a) $\Longrightarrow$ (c) It follows from Lemma~\ref{3.3} that $G$ is a torsion group. Assume that $2G$ is infinite. Clearly, there is some $e_0\in G$ such that $2e_0\not=0$. Now let $i\in\mathbb{N}_0$ and let $(e_j)_{j=1}^i$ be elements of $G$ such that $2e_k\not\in\langle\{e_j\colon j\in [0,k-1]\}\rangle$ for each $k\in [1,i]$. Note that $\langle\{e_j\colon j\in [0,i]\}\rangle$ is finite, and hence there is some $e_{i+1}\in G$ such that $2e_{i+1}\not\in\langle\{e_j\colon j\in [0,i]\}\rangle$. Consequently, there exists a sequence $(e_i)_{i\in\mathbb{N}_0}$ of elements of $G$ such that $2e_0\not=0$ and for each $i\in\mathbb{N}_0$, $2e_{i+1}\not\in\langle\{e_j\colon j\in [0,i]\}\rangle$.

For each $n\in\mathbb{N}$ let $S_n=(2e_0)(e_0+e_n)(e_0-e_n)$ and $a_n=\frac{\prod_{i=1}^n e_i^2}{2e_0}$. It is sufficient to show that for each $n\in\mathbb{N}$, $S_n\in\mathcal{B}_{\pm}(G)$, $a_n\in Q$, $a_nS_{n+1}\not\in\mathcal{B}_{\pm}(G)$ and for each $i\in [1,n]$, $a_nS_i\in\mathcal{B}_{\pm}(G)$. (Then $(\{S_i\colon i\in [1,n]\})_v\subsetneq (\{S_i\colon i\in [1,n+1]\})_v$ for each $n\in\mathbb{N}$, a contradiction.) Let $n\in\mathbb{N}$. Since $(-1)(2e_0)+(e_0+e_n)+(e_0-e_n)=0$, we have that $S_n\in\mathcal{B}_{\pm}(G)$. Clearly, $(2e_0)^2,e_0^2(2e_0)\in\mathcal{B}_{\pm}(G)$, and thus $\frac{e_0^2}{2e_0}=\frac{e_0^2(2e_0)}{(2e_0)^2}\in Q$. Since $e_j^2\in\mathcal{B}_{\pm}(G)$ for each $j\in [0,n]$, we infer that $a_n=\frac{\prod_{i=1}^n e_i^2}{e_0^2}\frac{e_0^2}{2e_0}\in Q$. Let $i\in [1,n]$. Then $\sum_{j=1,j\not=i}^n e_j+\sum_{j=1,j\not=i}^n (-1)e_j+(-1)e_i+(-1)e_i+(e_0+e_i)+(-1)(e_0-e_i)=0$, and hence $a_nS_i\in\mathcal{B}_{\pm}(G)$.

Assume that $a_nS_{n+1}\in\mathcal{B}_{\pm}(G)$. Then there are some $(\alpha_j)_{j=1}^n,(\beta_j)_{j=1}^n\in\{-1,1\}^n$ and $\gamma,\delta\in\{-1,1\}$ such that $\sum_{j=1}^n (\alpha_j+\beta_j)e_j+\gamma(e_0+e_{n+1})+\delta(e_0-e_{n+1})=0$. If $\gamma\not=\delta$, then $2e_{n+1}\in\langle\{e_j\colon j\in [0,n]\}\rangle$, a contradiction. Therefore $\gamma=\delta$ and $\sum_{j=1}^n (\alpha_j+\beta_j)e_j+2\gamma e_0=0$. Assume that $\alpha_j+\beta_j\not=0$ for some $j\in [1,n]$. Let $j\in [1,n]$ be maximal with $\alpha_j+\beta_j\not=0$. Then $2e_j\in\langle\{e_i\colon i\in [0,j-1]\}\rangle$, a contradiction. Consequently, $\alpha_j+\beta_j=0$ for all $j\in [1,n]$, and thus $2e_0=0$, a contradiction.

\smallskip
(b) $\Longrightarrow$ (c) First we show that $2G\subset\widehat{\mathcal{B}_{\pm}(G)}$. Let $z\in 2G$. Then $z=2g$ for some $g\in G$. Note that $g^2,zg^2\in\mathcal{B}_{\pm}(G)$, and hence $z=\frac{zg^2}{g^2}\in Q$. Since $z^2\in\mathcal{B}_{\pm}(G)$, we infer that $z\in\widehat{\mathcal{B}_{\pm}(G)}$. There is some $S\in\mathcal{B}_{\pm}(G)$ such that $Sz\in\mathcal{B}_{\pm}(G)$ for each $z\in 2G$. Observe that $2G\subset\sigma_{\pm}(S)$, and thus $2G$ is finite.

\smallskip
(c) $\Longrightarrow$ (d) If $N=|2G|$, then $(2N)g=0$ for each $g\in G$. Thus $G$ is bounded, whence it is a direct sum of cyclic groups (see \cite[Chapter 4]{Ro96}). Therefore there is a set $\mathcal{I}$, a family $(G_i)_{i\in\mathcal{I}}$ of subgroups of $G$ and $(n_i)_{i\in\mathcal{I}}\in (\mathbb{N}_{\geq 2})^{\mathcal{I}}$ such that $G_i$ is cyclic of order $n_i$ for each $i\in\mathcal{I}$ and $G=\bigoplus_{i\in\mathcal{I}} G_i$. Let $\mathcal{J}=\{i\in\mathcal{I}\colon n_i=2\}$, $\mathcal{K}=\{i\in\mathcal{I}\colon n_i\not=2\}$, $G_1=\bigoplus_{i\in\mathcal{J}} G_i$ and $G_2=\bigoplus_{i\in\mathcal{K}} G_i$. Note that $G_1$ and $G_2$ are subgroups of $G$, $G=G_1\oplus G_2$ and $G_1$ is an elementary $2$-group. Moreover, since $2G$ is finite, we have that $\mathcal{K}$ is finite, and thus $G_2$ is finite.

\smallskip
(d) $\Longrightarrow$ (a) Let
\[
\varphi\colon\mathcal{B}_{\pm}(G)\rightarrow\mathcal{F}(G)\times\mathcal{B}(G_1)\times\mathcal{B}_{\pm}(G_2)\textnormal{ be defined by }
\]

\[
\varphi\left(\prod_{i=1}^r (x_i^{\prime}+x_i^{\prime\prime})\right)=\left(\prod_{i=1}^r (x_i^{\prime}+x_i^{\prime\prime}),\prod_{i=1}^r x_i^{\prime},\prod_{i=1}^r x_i^{\prime\prime}\right)
\]

for each $r\in\mathbb{N}_0$, $(x_i^{\prime})_{i=1}^r\in G_1^r$ and $(x_i^{\prime\prime})_{i=1}^r\in G_2^r$.

We prove that $\varphi$ is a divisor homomorphism. Let $r\in\mathbb{N}_0$, $(x_i^{\prime})_{i=1}^r\in G_1^r$ and $(x_i^{\prime\prime})_{i=1}^r\in G_2^r$ be such that $\prod_{i=1}^r (x_i^{\prime}+x_i^{\prime\prime})\in\mathcal{B}_{\pm}(G)$. Then there is some $(\alpha_i)_{i=1}^r\in\{-1,1\}^r$ with $\sum_{i=1}^r\alpha_i(x_i^{\prime}+x_i^{\prime\prime})=0$, and thus $\sum_{i=1}^r x_i^{\prime}=\sum_{i=1}^r\alpha_ix_i^{\prime}=0$ and $\sum_{i=1}^r\alpha_ix_i^{\prime\prime}=0$. Therefore $(\prod_{i=1}^r (x_i^{\prime}+x_i^{\prime\prime}),\prod_{i=1}^r x_i^{\prime},\prod_{i=1}^r x_i^{\prime\prime})\in\mathcal{F}(G)\times\mathcal{B}(G_1)\times\mathcal{B}_{\pm}(G_2)$. This implies that $\varphi$ is well-defined, since each element of $\mathcal{B}_{\pm}(G)$ has a unique representation (up to order) as a formal product of sums of elements of $G_1$ and $G_2$. It is straightforward to prove that $\varphi$ is a monoid homomorphism.

Let $S,T\in\mathcal{B}_{\pm}(G)$, $A\in\mathcal{F}(G)$, $B\in\mathcal{B}(G_1)$ and $C\in\mathcal{B}_{\pm}(G_2)$ be such that $\varphi(T)=\varphi(S)(A,B,C)$ (i.e., $\varphi(S)$ divides $\varphi(T)$ in $\mathcal{F}(G)\times\mathcal{B}(G_1)\times\mathcal{B}_{\pm}(G_2)$). There are some $m,n\in\mathbb{N}_0$, $(g_i^{\prime})_{i=1}^n\in G_1^n$, $(g_i^{\prime\prime})_{i=1}^n\in G_2^n$, $(h_j^{\prime})_{j=1}^m\in G_1^m$ and $(h_j^{\prime\prime})_{j=1}^m\in G_2^m$ such that $S=\prod_{i=1}^n (g_i^{\prime}+g_i^{\prime\prime})$ and $A=\prod_{j=1}^m (h_j^{\prime}+h_j^{\prime\prime})$.

We have that $T=SA$, and hence $(T,\prod_{i=1}^n g_i^{\prime}\prod_{j=1}^m h_j^{\prime},\prod_{i=1}^n g_i^{\prime\prime}\prod_{j=1}^m h_j^{\prime\prime})=\varphi(T)=\varphi(S)(A,B,C)=(SA,(\prod_{i=1}^n g_i^{\prime})B,(\prod_{i=1}^n g_i^{\prime\prime})C)$. It follows that $\prod_{j=1}^m h_j^{\prime}=B\in\mathcal{B}(G_1)$ and $\prod_{j=1}^m h_j^{\prime\prime}=C\in\mathcal{B}_{\pm}(G_2)$. Therefore $\sum_{j=1}^m h_j^{\prime}=0$ and $\sum_{j=1}^m\beta_jh_j^{\prime\prime}=0$ for some $(\beta_j)_{j=1}^m\in\{-1,1\}^m$. Note that $\sum_{j=1}^m\beta_j(h_j^{\prime}+h_j^{\prime\prime})=\sum_{j=1}^m\beta_jh_j^{\prime}+\sum_{j=1}^m\beta_jh_j^{\prime\prime}=\sum_{j=1}^m h_j^{\prime}=0$, and thus $A\in\mathcal{B}_{\pm}(G)$ and $S$ divides $T$ in $\mathcal{B}_{\pm}(G)$. This shows that $\varphi$ is a divisor homomorphism.

Clearly, $\mathcal{F}(G)$ and $\mathcal{B}(G_1)$ are Mori monoids (since they are Krull monoids). It follows from \cite[Theorem 5.1]{Ge-HK-Zh22} and \cite[Theorem 2.9.13]{Ge-HK06a} that $\mathcal{B}_{\pm}(G_2)$ is a Mori monoid. Therefore $\mathcal{F}(G)\times\mathcal{B}(G_1)\times\mathcal{B}_{\pm}(G_2)$ is a Mori monoid by \cite[Proposition 2.1.11]{Ge-HK06a}. We infer by \cite[Proposition 2.4.4.(b)]{Ge-HK06a} that $\mathcal{B}_{\pm}(G)$ is a Mori monoid.

\smallskip
(d) $\Longrightarrow$ (b) Since $G_2$ is finite, $(\mathcal{B}_{\pm}(G_2)\colon\widehat{\mathcal{B}_{\pm}(G_2)})\not=\emptyset$ by \cite[Theorem 5.1]{Ge-HK-Zh22} and \cite[Theorem 2.9.11]{Ge-HK06a}. There is some $a\in (\mathcal{B}_{\pm}(G_2)\colon\widehat{\mathcal{B}_{\pm}(G_2)})$. It is sufficient to show that $a\in (\mathcal{B}_{\pm}(G)\colon\widehat{\mathcal{B}_{\pm}(G)})$. Note that $a\in\mathcal{B}_{\pm}(G_2)\subset\mathcal{B}_{\pm}(G)$ and $a\widehat{\mathcal{B}_{\pm}(G_2)}\subset\mathcal{B}_{\pm}(G_2)$. Let $x\in\widehat{\mathcal{B}_{\pm}(G)}$. It remains to show that $ax\in\mathcal{B}_{\pm}(G)$.

\smallskip
There is some $y\in\mathcal{B}_{\pm}(G)$ such that $xy\in\mathcal{B}_{\pm}(G)$. Furthermore, there are some $\ell,m,n\in\mathbb{N}_0$, $(a_k)_{k=1}^{\ell}\in G_2^{\ell}$, $(x_i)_{i=1}^n\in G^n$ and $(y_j)_{j=1}^m\in G^m$ such that $a=\prod_{k=1}^{\ell} a_k$, $x=\prod_{i=1}^n x_i$ and $y=\prod_{j=1}^m y_j$. Finally, there are some $(x_i^{\prime})_{i=1}^n\in G_1^n$, $(x_i^{\prime\prime})_{i=1}^n\in G_2^n$, $(y_j^{\prime})_{j=1}^m\in G_1^m$ and $(y_j^{\prime\prime})_{j=1}^m\in G_2^m$ such that $x_i=x_i^{\prime}+x_i^{\prime\prime}$ for each $i\in [1,n]$ and $y_j=y_j^{\prime}+y_j^{\prime\prime}$ for each $j\in [1,m]$.

Since $y\in\mathcal{B}_{\pm}(G)$, there is some $(\alpha_j)_{j=1}^m\in\{-1,1\}^m$ with $\sum_{j=1}^m\alpha_jy_j=0$. Since $G=G_1\oplus G_2$ and $G_1$ is an elementary $2$-group, this implies that $\sum_{j=1}^m y_j^{\prime}=\sum_{j=1}^m\alpha_jy_j^{\prime}=0$ and $\sum_{j=1}^m\alpha_jy_j^{\prime\prime}=0$. Consequently, $\prod_{j=1}^m y_j^{\prime\prime}\in\mathcal{B}_{\pm}(G_2)$.

Since $xy\in\mathcal{B}_{\pm}(G)$, there are some $(\beta_i)_{i=1}^n\in\{-1,1\}^n$ and $(\gamma_j)_{j=1}^m\in\{-1,1\}^m$ with $\sum_{i=1}^n\beta_ix_i+\sum_{j=1}^m\gamma_jy_j=0$. Again since $G=G_1\oplus G_2$ and $G_1$ is an elementary $2$-group, we have that $\sum_{i=1}^n x_i^{\prime}=\sum_{i=1}^n x_i^{\prime}+\sum_{j=1}^m y_j^{\prime}=\sum_{i=1}^n\beta_ix_i^{\prime}+\sum_{j=1}^m\gamma_jy_j^{\prime}=0$ and $\sum_{i=1}^n\beta_ix_i^{\prime\prime}+\sum_{j=1}^m\gamma_jy_j^{\prime\prime}=0$. This implies that $\prod_{i=1}^n x_i^{\prime\prime}\prod_{j=1}^m y_j^{\prime\prime}\in\mathcal{B}_{\pm}(G_2)$.

Observe that $\prod_{i=1}^n x_i^{\prime\prime}\in\widehat{\mathcal{B}_{\pm}(G_2)}$ by the proof of Theorem~\ref{3.2}, since $\prod_{i=1}^n x_i^{\prime\prime}\in\mathcal{F}(G_2)$, $\prod_{j=1}^m y_j^{\prime\prime}\in\mathcal{B}_{\pm}(G_2)$ and $\prod_{i=1}^n x_i^{\prime\prime}\prod_{j=1}^m y_j^{\prime\prime}\in\mathcal{B}_{\pm}(G_2)$. It follows that $a\prod_{i=1}^n x_i^{\prime\prime}\in\mathcal{B}_{\pm}(G_2)$, and hence there are some $(\delta_k)_{k=1}^{\ell}\in\{-1,1\}^{\ell}$ and $(\varepsilon_i)_{i=1}^n\in\{-1,1\}^n$ with $\sum_{k=1}^{\ell}\delta_ka_k+\sum_{i=1}^n\varepsilon_ix_i^{\prime\prime}=0$.

Since $G_1$ is an elementary $2$-group, we infer that $\sum_{k=1}^{\ell}\delta_ka_k+\sum_{i=1}^n\varepsilon_ix_i=\sum_{k=1}^{\ell}\delta_ka_k+\sum_{i=1}^n\varepsilon_ix_i^{\prime\prime}+\sum_{i=1}^n\varepsilon_ix_i^{\prime}=\sum_{i=1}^n\varepsilon_ix_i^{\prime}=\sum_{i=1}^n x_i^{\prime}=0$. Therefore $ax\in\mathcal{B}_{\pm}(G)$.

\smallskip
Now let the equivalent conditions be satisfied and let $\varphi\colon\mathcal{B}_{\pm}(G)\rightarrow\mathcal{F}(G)\times\mathcal{B}(G_1)\times\mathcal{B}_{\pm}(G_2)$ be the divisor homomorphism from above. Furthermore, let $\psi\colon\mathcal{B}_{\pm}(G_2)\rightarrow\mathcal{F}(G_2)\times\mathcal{B}_{\pm}(G)$ be defined by $\psi(S)=(S,S)$ for each $S\in\mathcal{B}_{\pm}(G_2)$. Then $\psi$ is a divisor homomorphism (e.g. see the proof of Corollary~\ref{3.5} below).

\smallskip
First let $\mathcal{B}_{\pm}(G)$ be seminormal. Since $\mathcal{F}(G_2)$ is seminormal, $\mathcal{F}(G_2)\times\mathcal{B}_{\pm}(G)$ is seminormal, and hence $\mathcal{B}_{\pm}(G_2)$ is seminormal (since $\psi$ is a divisor homomorphism). Consequently, ${\rm exp}(G_2)\mid 4$ by \cite[Theorem 5.3.2]{Ge-HK-Zh22}. Since ${\rm exp}(G_1)\mid 2$, we obtain that ${\rm exp}(G)={\rm lcm}({\rm exp}(G_1),{\rm exp}(G_2))\mid 4$.

\smallskip
Now let ${\rm exp}(G)\mid 4$. Then ${\rm exp}(G_2)\mid 4$ and $\mathcal{B}_{\pm}(G_2)$ is seminormal by \cite[Theorem 5.3.2]{Ge-HK-Zh22}. Since $\mathcal{F}(G)$ and $\mathcal{B}(G_1)$ are Krull monoids (and thus seminormal), we have that $\mathcal{F}(G)\times\mathcal{B}(G_1)\times\mathcal{B}_{\pm}(G_2)$ is seminormal. Therefore $\mathcal{B}_{\pm}(G)$ is seminormal (since $\varphi$ is a divisor homomorphism).
\end{proof}

A monoid homomorphism $\theta\colon H\to B$ is said to be a {\it transfer homomorphism} if the following two conditions hold.
\begin{enumerate}
\item[{\bf (T\,1)\,}] $B=\theta(H) B^\times$ and $\theta^{-1}(B^\times)=H^\times$.

\item[{\bf (T\,2)\,}] If $u\in H$,\ $b,\,c\in B$ and $\theta(u)=bc$, then there exist\ $v,\,w\in H$\ such that\ $u=vw$, $\theta(v)\in bB^{\times}$, and $\theta(w)\in cB^{\times}$.
\end{enumerate}
A monoid is said to be {\it transfer Krull} if it has a transfer homomorphism to a Krull monoid.
Thus, every Krull monoid is transfer Krull, because the identity is a transfer homomorphism. For a list of transfer Krull monoids, that are not Krull, we refer to \cite[Section 5]{Ge-Zh20a} and to \cite{Ba-Re22a}.

\smallskip
\begin{corollary}\label{3.5}
Let $G$ be an abelian group. Then the following statements are equivalent.
\begin{enumerate}
\item[(a)] $\mathcal{B}_{\pm}(G)$ is a Krull monoid.
\item[(b)] $\mathcal{B}_{\pm}(G)$ is completely integrally closed.
\item[(c)] $\mathcal{B}_{\pm}(G)$ is root closed.
\item[(d)] $\mathcal{B}_{\pm}(G)$ is a transfer Krull monoid.
\item[(e)] $G$ is an elementary $2$-group.
\end{enumerate}
\end{corollary}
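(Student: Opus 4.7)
The plan is to close the loop via the cyclic implications $\textup{(e)}\Rightarrow\textup{(a)}\Rightarrow\textup{(b)}\Rightarrow\textup{(c)}\Rightarrow\textup{(a)}\Rightarrow\textup{(d)}\Rightarrow\textup{(e)}$, so that all five conditions become equivalent. Most steps are immediate or flow from Theorem~\ref{3.2}; the genuinely new content is the final implication \textup{(d)}~$\Rightarrow$~\textup{(e)}.

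For the routine implications: \textup{(e)}~$\Rightarrow$~\textup{(a)} holds because in an elementary $2$-group one has $-g=g$ for every $g\in G$, whence $\sigma_{\pm}(S)=\{\sigma(S)\}$ and $\mathcal{B}_{\pm}(G)=\mathcal{B}(G)$, a classical Krull monoid. The chain \textup{(a)}~$\Rightarrow$~\textup{(b)}~$\Rightarrow$~\textup{(c)} follows from the definition of Krull monoid and the inclusion $\widetilde{H}\subseteq\widehat{H}$. The converse \textup{(c)}~$\Rightarrow$~\textup{(a)} is immediate from Theorem~\ref{3.2}: since $\widetilde{\mathcal{B}_{\pm}(G)}=\widehat{\mathcal{B}_{\pm}(G)}$ is Krull, root closedness of $\mathcal{B}_{\pm}(G)$ yields $\mathcal{B}_{\pm}(G)=\widehat{\mathcal{B}_{\pm}(G)}$ and hence Krull. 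Finally \textup{(a)}~$\Rightarrow$~\textup{(d)} is trivial, since the identity is a transfer homomorphism.

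The main obstacle is \textup{(d)}~$\Rightarrow$~\textup{(e)}. Let $\theta\colon\mathcal{B}_{\pm}(G)\to B$ be a transfer homomorphism into a Krull monoid, and suppose for contradiction that some $g\in G$ satisfies $2g\neq 0$. I will work with the three elements
\[
U=g^2,\qquad V=(2g)^2,\qquad W=g^2\cdot(2g)
\]
of $\mathcal{B}_{\pm}(G)$. Each is an atom: each lies in $\mathcal{B}_{\pm}(G)$ via an obvious signing (for $W$, $g+g-(2g)=0$), and since the unique length-one atom of $\mathcal{B}_{\pm}(G)$ is $0\in G$, which appears in none of $U,V,W$ (as $g,2g\neq 0$), none admits a proper decomposition into two non-trivial factors of $\mathcal{B}_{\pm}(G)$. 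The decisive identity is
\[
W^2 \;=\; g^4(2g)^2 \;=\; U^2V\,.
\]
Applying $\theta$, and using that transfer homomorphisms send atoms to atoms modulo units while $\theta^{-1}(B^{\times})=\mathcal{B}_{\pm}(G)^{\times}=\{1\}$ guarantees that $\theta(U),\theta(V),\theta(W)$ are genuine atoms of $B$, one obtains $\theta(W)^2=\theta(U)^2\theta(V)$ in $B$. Passing to a divisor theory $B_{\red}\hookrightarrow\mathcal{F}(P)$ and comparing multiplicities prime by prime, the element $z:=\theta(W)/\theta(U)$ lies in $\mathsf{q}(B_{\red})\cap\mathcal{F}(P)=B_{\red}$ and satisfies $\theta(V)=z^2$. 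The atomicity of $\theta(V)$ in the reduced Krull monoid $B_{\red}$ forces $z$ to be trivial, hence $\theta(V)\in B^{\times}$ and $V\in\theta^{-1}(B^{\times})=\{1\}$; but $V=(2g)^2\neq 1$, the desired contradiction.
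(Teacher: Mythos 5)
Your proof is correct, and for the only non-routine content it takes a genuinely different route from the paper. The cycle (a) $\Rightarrow$ (b) $\Rightarrow$ (c) $\Rightarrow$ (a) via Theorem~\ref{3.2}, together with (e) $\Rightarrow$ (a) and (a) $\Rightarrow$ (d), matches the paper's treatment. For the rest, the paper splits the work in two: it quotes \cite[Proposition 3.8]{B-M-O-S22} for (d) $\Rightarrow$ (a), and proves (a) $\Rightarrow$ (e) by invoking Theorem~\ref{3.4} to write $G=G_1\oplus G_2$ with $G_1$ an elementary $2$-group and $G_2$ finite, showing via the divisor homomorphism $S\mapsto (S,S)$ that $\mathcal{B}_{\pm}(G_2)$ is Krull, and then citing \cite[Theorem 4.4]{Ge-HK-Zh22}. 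You instead prove (d) $\Rightarrow$ (e) directly and self-containedly: if $2g\neq 0$, the three atoms $U=g^2$, $V=(2g)^2$, $W=g^2(2g)$ satisfy $W^2=U^2V$; pushing this relation through a transfer homomorphism $\theta$ into a Krull monoid and comparing exponents in a divisor theory of $B_{\red}$ forces the atom $\theta(V)$ to be a square of an element of $B_{\red}$, hence forces $\theta(V)$ to be a unit, a contradiction. This is sound: the only points worth checking are that $U,V,W$ are indeed atoms (clear, since the only length-one element of $\mathcal{B}_{\pm}(G)$ is the sequence $0$, which divides none of them) and that transfer homomorphisms preserve atoms, which follows from (T1) and (T2). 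Your argument buys a completely elementary, citation-free treatment of the hard direction that simultaneously subsumes the paper's (d) $\Rightarrow$ (a) and (a) $\Rightarrow$ (e), in effect reproving the cited result of Boukheche--Merito--Ordaz--Schmid in this special case; the paper's route requires no new combinatorial work inside the corollary but leans on Theorem~\ref{3.4} and two external results.
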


\begin{proof}
(a) $\Longrightarrow$ (b) Every Krull monoid is completely integrally closed.

(b) $\Longrightarrow$ (c) Every completely integrally closed monoid is root closed.

(c) $\Longrightarrow$ (a) If $\mathcal{B}_{\pm}(G)$ is root closed, then $\mathcal{B}_{\pm}(G)=\widetilde{\mathcal{B}_{\pm}(G)}$ is a Krull monoid by Theorem~\ref{3.2}.1.

(a) $\Longleftrightarrow$ (d) Every Krull monoid is transfer Krull and the reverse implication was proved in \cite[Proposition 3.8]{B-M-O-S22}.

(e) $\Longrightarrow$ (a) If $G$ is an elementary $2$-group, then $\mathcal{B}_{\pm}(G)=\mathcal{B}(G)$ is a Krull monoid by \cite[Proposition 2.5.6]{Ge-HK06a}.

(a) $\Longrightarrow$ (e) It follows from Theorem~\ref{3.4} that there are some subgroups $G_1$ and $G_2$ of $G$ such that $G=G_1\oplus G_2$, $G_1$ is an elementary $2$-group and $G_2$ is finite. Let $\varphi\colon\mathcal{B}_{\pm}(G_2)\rightarrow\mathcal{F}(G_2)\times\mathcal{B}_{\pm}(G)$ be defined by $\varphi(S)=(S,S)$ for each $S\in\mathcal{B}_{\pm}(G_2)$. Clearly, $\varphi$ is a monoid homomorphism. Moreover, since $\mathcal{F}(G_2)\cap\mathcal{B}_{\pm}(G)=\mathcal{B}_{\pm}(G_2)$, we obtain that $\varphi$ is a divisor homomorphism. It follows from \cite[Proposition 2.3.7]{Ge-HK06a} that $\mathcal{F}(G_2)\times\mathcal{B}_{\pm}(G)$ is a Krull monoid, and hence $\mathcal{B}_{\pm}(G_2)$ is a Krull monoid by \cite[Proposition 2.4.4.(b)]{Ge-HK06a}. Therefore $G_2$ is an elementary $2$-group by \cite[Theorem 4.4]{Ge-HK-Zh22}, and thus $G$ is an elementary $2$-group.
\end{proof}

\smallskip
\begin{lemma}\label{3.6}
Let $G$ be an abelian group and let $G_1$ and $G_2$ be subgroups of $G$ such that $G=G_1\oplus G_2$, $G_1$ is an elementary $2$-group and $G_2$ is finite. Then $\mathsf{q}\big(\mathcal{B}_{\pm}(G)\big)/\mathsf{q}\big(\mathcal{B}(G)\big)$ is finitely generated.
\end{lemma}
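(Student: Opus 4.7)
The plan is to identify $\mathsf{q}(\mathcal{B}_\pm(G))/\mathsf{q}(\mathcal{B}(G))$ with a subgroup of $G$ via the standard sum map, and then recognize that subgroup as $2G$, which under the given hypotheses on $G_1$ and $G_2$ is a finite group.

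First, recall from the proof of \cite[Proposition 2.5.6]{Ge-HK06a} (also invoked in the proof of Theorem~\ref{3.2}) that the sum map $\sigma\colon\mathcal{F}(G)\to G$ extends uniquely to a group homomorphism $\overline\sigma\colon\mathsf{q}(\mathcal{F}(G))\to G$ with kernel exactly $\mathsf{q}(\mathcal{B}(G))$; this yields the isomorphism $\mathsf{q}(\mathcal{F}(G))/\mathsf{q}(\mathcal{B}(G))\cong G$. Since $\mathcal{B}(G)\subset\mathcal{B}_\pm(G)\subset\mathcal{F}(G)$, the quotient $\mathsf{q}(\mathcal{B}_\pm(G))/\mathsf{q}(\mathcal{B}(G))$ embeds into $G$ via $\overline\sigma$, and its image is the subgroup $\overline\sigma(\mathsf{q}(\mathcal{B}_\pm(G)))\subset G$.

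The key computation is to show that this image equals $2G$. For any $S=g_1\cdot\ldots\cdot g_\ell\in\mathcal{B}_\pm(G)$ with signs $\varepsilon_1,\ldots,\varepsilon_\ell\in\{1,-1\}$ realizing $\sum\varepsilon_ig_i=0$, one has
\[
\sigma(S)=\sum_{i=1}^\ell g_i=\sum_{i=1}^\ell g_i-\sum_{i=1}^\ell\varepsilon_ig_i=2\sum_{\varepsilon_i=-1}g_i\in 2G,
\]
so $\sigma(\mathcal{B}_\pm(G))\subset 2G$. Conversely, for every $g\in G$ the sequence $g^2\in\mathcal{B}_\pm(G)$ satisfies $\sigma(g^2)=2g$, so $2G\subset\sigma(\mathcal{B}_\pm(G))$. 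Since $2G$ is a subgroup, passing to $\mathsf{q}(\mathcal{B}_\pm(G))$ preserves this equality, and hence $\overline\sigma(\mathsf{q}(\mathcal{B}_\pm(G)))=2G$.

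Finally, using the decomposition $G=G_1\oplus G_2$ with $G_1$ an elementary $2$-group and $G_2$ finite, one has $2G=2G_1\oplus 2G_2=\{0\}\oplus 2G_2\cong 2G_2$, which is finite. Thus $\mathsf{q}(\mathcal{B}_\pm(G))/\mathsf{q}(\mathcal{B}(G))\cong 2G$ is finite, in particular finitely generated. There is no real obstacle here; the only subtlety is the two-step identification (first $\overline\sigma$ to land in $G$, then recognition of the image as $2G$), which already appeared implicitly in the closing lines of the proof of Theorem~\ref{3.2}.
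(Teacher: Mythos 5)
Your argument is correct: the kernel of the extended sum map $\overline\sigma\colon\mathsf{q}(\mathcal{F}(G))\to G$ is indeed $\mathsf{q}(\mathcal{B}(G))$ (this is the content of the cited Proposition 2.5.6, already invoked in Theorem~\ref{3.2}), the identity $\sigma(S)=\sum_i(1-\varepsilon_i)g_i=2\sum_{\varepsilon_i=-1}g_i$ shows $\sigma(\mathcal{B}_{\pm}(G))\subset 2G$, the sequences $g^2$ give the reverse inclusion, and $2G=2G_2$ is finite under the hypotheses. So you obtain $\mathsf{q}(\mathcal{B}_{\pm}(G))/\mathsf{q}(\mathcal{B}(G))\cong 2G$, which is finite and a fortiori finitely generated.

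This is a genuinely different route from the paper's. The paper works directly in the quotient group without the sum map: it sets $N=|G_2|$ and $E=\{g^2\colon g\in G_2\}$, and for $S=\prod_i(g_i'+g_i'')$ with signs $(\alpha_i)$ it takes $e=\prod_{\alpha_i=1}(g_i'')^2\in[E]$ and verifies by explicit sign manipulation that both $S^N$ and $S^{N-1}e$ lie in $\mathcal{B}(G)$, whence $S=e\cdot S^N/(S^{N-1}e)\in e\,\mathsf{q}(\mathcal{B}(G))$; the quotient is then generated by the finitely many classes of elements of $E$. That argument is elementary and self-contained, needing no input beyond the definitions. Your approach imports the divisor-theoretic fact $\mathsf{q}(\mathcal{F}(G))/\mathsf{q}(\mathcal{B}(G))\cong G$ as a black box, but in exchange it is shorter and yields a sharper conclusion: the quotient is not merely finitely generated but finite, and explicitly isomorphic to $2G$. (The two proofs are really two sides of the same computation, since the classes of $g^2$ with $g\in G_2$ correspond under $\overline\sigma$ to the elements $2g$ generating $2G_2=2G$.) Either version suffices for the application in Theorem~\ref{3.7}, where only finite generation of the quotient is used.
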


\begin{proof}
Let $N=|G_2|$ and let $E=\{g^2\colon g\in G_2\}$. First we show that for each $S\in\mathcal{B}_{\pm}(G)$, there is some $e\in [E]$ such that $S\in e\mathsf{q}\big(\mathcal{B}(G)\big)$. Let $S\in\mathcal{B}_{\pm}(G)$. Then there are some $n\in\mathbb{N}_0$, $(g_i^{\prime})_{i=1}^n\in G_1^n$ and $(g_i^{\prime\prime})_{i=1}^n\in G_2^n$ such that $S=\prod_{i=1}^n (g_i^{\prime}+g_i^{\prime\prime})$. Moreover, there is some $(\alpha_i)_{i=1}^n\in\{-1,1\}^n$ such that $\sum_{i=1}^n\alpha_i(g_i^{\prime}+g_i^{\prime\prime})=0$. Observe that $\sum_{i=1}^n g_i^{\prime}=0$ and $\sum_{i=1}^n\alpha_ig_i^{\prime\prime}=0$. Set $e=\prod_{i=1,\alpha_i=1}^n (g_i^{\prime\prime})^2$. Then $e\in [E]$. Since $\sum_{i=1}^n N(g_i^{\prime}+g_i^{\prime\prime})=N\sum_{i=1}^n g_i^{\prime}+\sum_{i=1}^n Ng_i^{\prime\prime}=0$, we have that $S^N\in\mathcal{B}(G)$. Moreover, $\sum_{i=1}^n (N-1)(g_i^{\prime}+g_i^{\prime\prime})+\sum_{i=1,\alpha_i=1}^n 2g_i^{\prime\prime}=(N-1)\sum_{i=1}^n g_i^{\prime}+\sum_{i=1}^n (N-1)g_i^{\prime\prime}+\sum_{i=1,\alpha_i=1}^n (1+\alpha_i)g_i^{\prime\prime}=\sum_{i=1}^n (N-1)g_i^{\prime\prime}+\sum_{i=1}^n (1+\alpha_i)g_i^{\prime\prime}=\sum_{i=1}^n Ng_i^{\prime\prime}+\sum_{i=1}^n\alpha_ig_i^{\prime\prime}=0$. Consequently, $S^{N-1}e\in\mathcal{B}(G)$. This implies that $S=e\frac{S^N}{S^{N-1}e}\in e\mathsf{q}\big(\mathcal{B}(G)\big)$.

\smallskip
Since $E$ is finite, it is sufficient to show that $\mathsf{q}\big(\mathcal{B}_{\pm}(G)\big)/\mathsf{q}\big(\mathcal{B}(G)\big)=\langle\{y\mathsf{q}\big(\mathcal{B}(G)\big)\colon y\in E\}\rangle$. Clearly, $E\subset\mathcal{B}_{\pm}(G)$, and thus $\langle\{y\mathsf{q}\big(\mathcal{B}(G)\big)\colon y\in E\}\rangle\subset\mathsf{q}\big(\mathcal{B}_{\pm}(G)\big)/\mathsf{q}\big(\mathcal{B}(G)\big)$. Now let $x\in\mathsf{q}\big(\mathcal{B}_{\pm}(G)\big)/\mathsf{q}\big(\mathcal{B}(G)\big)$. Then there are some $S,T\in\mathcal{B}_{\pm}(G)$ such that $x=\frac{S}{T}\mathsf{q}\big(\mathcal{B}(G)\big)$. As shown before, there are some $e,f\in [E]$ such that $S\in e\mathsf{q}\big(\mathcal{B}(G)\big)$ and $T\in f\mathsf{q}\big(\mathcal{B}(G)\big)$. It follows that $S\mathsf{q}\big(\mathcal{B}(G)\big)=e\mathsf{q}\big(\mathcal{B}(G)\big)\in\langle\{y\mathsf{q}\big(\mathcal{B}(G)\big)\colon y\in E\}\rangle$ and $T\mathsf{q}\big(\mathcal{B}(G)\big)=f\mathsf{q}\big(\mathcal{B}(G)\big)\in\langle\{y\mathsf{q}\big(\mathcal{B}(G)\big)\colon y\in E\}\rangle$. This implies that $x=\frac{S}{T}\mathsf{q}\big(\mathcal{B}(G)\big)=\frac{S\mathsf{q}\left(\mathcal{B}(G)\right)}{T\mathsf{q}\left(\mathcal{B}(G)\right)}\in\langle\{y\mathsf{q}\big(\mathcal{B}(G)\big)\colon y\in E\}\rangle$.
\end{proof}

\smallskip
\begin{theorem}\label{3.7}
Let $G$ be an abelian group. Then the following statements are equivalent.
\begin{enumerate}
\item[(a)] $\mathcal{B}_{\pm}(G)$ is finitely generated.
\item[(b)] $\mathcal{B}_{\pm}(G)$ is a C-monoid defined in $\mathcal{F}(G)$.
\item[(c)] $\mathcal{B}_{\pm}(G)$ is a C-monoid.
\item[(d)] $\mathcal{B}_{\pm}(G)$ is a Mori monoid and $\mathcal{C}_v\big(\widehat{\mathcal{B}_{\pm}(G)}\big)$ is finitely generated.
\item[(e)] $G$ is finite.
\end{enumerate}
\end{theorem}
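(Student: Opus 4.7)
The plan is to close the cycle (a) $\Rightarrow$ (b) $\Rightarrow$ (c) $\Rightarrow$ (d) $\Rightarrow$ (e) $\Rightarrow$ (a). Three of these steps are essentially formal: (b) $\Rightarrow$ (c) is immediate from the definition; (c) $\Rightarrow$ (d) follows from the basic properties of C-monoids collected in Section~\ref{2} (every C-monoid is Mori, and the divisor class group $\mathcal{C}(\widehat{H})$ is finite, hence in particular the $v$-class group $\mathcal{C}_v(\widehat{H})$ is finitely generated); and (e) $\Rightarrow$ (a) holds because, when $G$ is finite, the recalled fact that $\mathcal{A}(\mathcal{B}_{\pm}(G))$ is finite says exactly that the atomic BF-monoid $\mathcal{B}_{\pm}(G)$ is generated by its finitely many atoms.

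For (a) $\Rightarrow$ (b) I would first reduce to (e) and then verify the C-monoid conditions by hand. The key observation is that for every $g \in G \setminus \{0\}$ the square $g^2$ is an atom of $\mathcal{B}_{\pm}(G)$: it is a plus-minus weighted zero-sum sequence via the signs $(+,-)$, while its only nontrivial divisor $g$ is not. Thus, if $\mathcal{B}_{\pm}(G)$ is finitely generated then $\mathcal{A}(\mathcal{B}_{\pm}(G))$ is finite, which forces $G \setminus \{0\}$ to be finite. Assuming now $G$ is finite, to conclude that $\mathcal{B}_{\pm}(G)$ is a C-monoid defined in $\mathcal{F}(G)$ one checks the trivial unit condition $\mathcal{B}_{\pm}(G) \cap \mathcal{F}(G)^{\times} = \{1\}$ and the finiteness of the reduced class semigroup. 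The latter is transparent: for any $S,T \in \mathcal{F}(G)$ one has $ST \in \mathcal{B}_{\pm}(G)$ if and only if $\sigma_{\pm}(S) \cap \sigma_{\pm}(T) \ne \emptyset$ (using $-\sigma_{\pm}(T) = \sigma_{\pm}(T)$), so the $\mathcal{B}_{\pm}(G)$-class of $T$ depends only on the subset $\sigma_{\pm}(T) \subset G$, and there are at most $2^{|G|}$ such subsets.

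The main work is (d) $\Rightarrow$ (e), where Lemma~\ref{3.6} is used crucially. Assume $\mathcal{B}_{\pm}(G)$ is Mori and that $\mathcal{C}_v(\widehat{\mathcal{B}_{\pm}(G)})$ is finitely generated; we may take $|G| \ge 3$ since otherwise $G$ is automatically finite. By Theorem~\ref{3.4} we obtain a splitting $G = G_1 \oplus G_2$ with $G_1$ an elementary $2$-group and $G_2$ finite, so it suffices to prove $G_1$ is finite, i.e.\ that $G$ is finitely generated. By Theorem~\ref{3.2}.2 the inclusion $\widehat{\mathcal{B}_{\pm}(G)} = \widetilde{\mathcal{B}_{\pm}(G)} \hookrightarrow \mathcal{F}(G)$ is a divisor theory, and its class group is identified with $G/U$, where $U = \mathsf{q}(\mathcal{B}_{\pm}(G))/\mathsf{q}(\mathcal{B}(G))$ viewed as a subgroup of $G \cong \mathsf{q}(\mathcal{F}(G))/\mathsf{q}(\mathcal{B}(G))$. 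Lemma~\ref{3.6} yields that $U$ is finitely generated, while our hypothesis gives that $G/U$ is finitely generated; from the short exact sequence $0 \to U \to G \to G/U \to 0$ we conclude that $G$ itself is finitely generated, and a finitely generated elementary $2$-group is finite, so $G_1$ and hence $G$ is finite. The only delicate point is the identification $\mathsf{q}(\mathcal{F}(G))/\mathsf{q}(\mathcal{B}_{\pm}(G)) \cong G/U$, but this is exactly the content of Theorem~\ref{3.2}.2; once it and Lemma~\ref{3.6} are invoked, the argument reduces to the short-exact-sequence observation.
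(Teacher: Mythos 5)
Your proposal is correct, and for the one substantive implication, (d) $\Longrightarrow$ (e), it is essentially the paper's argument: both use Theorem~\ref{3.4} to split $G=G_1\oplus G_2$, Theorem~\ref{3.2}.2 to identify $\mathcal{C}_v\big(\widehat{\mathcal{B}_{\pm}(G)}\big)$ with $\mathsf{q}\big(\mathcal{F}(G)\big)/\mathsf{q}\big(\mathcal{B}_{\pm}(G)\big)\cong G/U$ with $U=\mathsf{q}\big(\mathcal{B}_{\pm}(G)\big)/\mathsf{q}\big(\mathcal{B}(G)\big)$, and Lemma~\ref{3.6} to get $U$ finitely generated, concluding that the torsion group $G$ is finitely generated and hence finite. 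Where you differ is in the routing and sourcing of the easy implications. The paper proves (a) $\Longrightarrow$ (d) directly by citing general structure theory of finitely generated monoids (that $\widehat{H}$ is then finitely generated Krull with finite $\mathfrak{X}(\widehat H)$), and it disposes of (e) $\Longrightarrow$ (a),(b) by citing \cite[Theorem 5.1]{Ge-HK-Zh22} and its proof; you instead close a single cycle and make these steps self-contained: (a) $\Longrightarrow$ (e) via the observation that $g^2$ is an atom for every $g\neq 0$, and (e) $\Longrightarrow$ (b) by noting that for $S,T\in\mathcal{F}(G)$ one has $ST\in\mathcal{B}_{\pm}(G)$ if and only if $\sigma_{\pm}(S)\cap\sigma_{\pm}(T)\neq\emptyset$ (since $\sigma_{\pm}(T)=-\sigma_{\pm}(T)$), so that $H$-equivalence classes in $\mathcal{F}(G)$ are determined by the at most $2^{|G|}$ possible sets $\sigma_{\pm}(T)$. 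The trade-off is the usual one: the paper's version is shorter but leans on external references, while yours is longer but exhibits the finiteness of the reduced class semigroup explicitly; your $\sigma_{\pm}$-argument is a clean and correct replacement for the citation.
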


\begin{proof}
(a) $\Longrightarrow$ (d) It is an immediate consequence of \cite[Proposition 2.7.11 and Theorems 2.7.13 and 2.7.14]{Ge-HK06a} that $\mathcal{B}_{\pm}(G)$ is a Mori monoid, $\widehat{\mathcal{B}_{\pm}(G)}$ is a finitely generated Krull monoid and $\mathfrak{X}\big(\widehat{\mathcal{B}_{\pm}(G)}\big)$ is finite (since $\mathcal{B}_{\pm}(G)$ and $\widehat{\mathcal{B}_{\pm}(G)}$ are reduced). We have that $\mathcal{C}_v\big(\widehat{\mathcal{B}_{\pm}(G)}\big)=\langle\{[P]_{\mathcal{C}_v\left(\widehat{\mathcal{B}_{\pm}(G)}\right)}\colon P\in\mathfrak{X}\big(\widehat{\mathcal{B}_{\pm}(G)}\big)\}\rangle$ (since $\widehat{\mathcal{B}_{\pm}(G)}$ is a Krull monoid), and thus $\mathcal{C}_v\big(\widehat{\mathcal{B}_{\pm}(G)}\big)$ is finitely generated.

\smallskip
(b) $\Longrightarrow$ (c) This is obvious.

\smallskip
(c) $\Longrightarrow$ (d) We have that $\mathcal{B}_{\pm}(G)$ is a Mori monoid by \cite[Theorem 2.9.13]{Ge-HK06a}. Moreover, $\mathcal{C}_v\big(\widehat{\mathcal{B}_{\pm}(G)}\big)$ is finite by \cite[Theorem 2.9.11]{Ge-HK06a}.

\smallskip
(d) $\Longrightarrow$ (e) Without restriction, we can assume that $|G|\geq 3$. It follows from Lemmas~\ref{3.3} and~\ref{3.6} and Theorem~\ref{3.4} that $G$ is a torsion group and $\mathsf{q}\big(\mathcal{B}_{\pm}(G)\big)/\mathsf{q}\big(\mathcal{B}(G)\big)$ is finitely generated. Since $\widehat{\mathcal{B}_{\pm}(G)}\hookrightarrow\mathcal{F}(G)$ is a divisor theory by Theorem~\ref{3.2}.2, we infer by \cite[Theorem 2.4.7]{Ge-HK06a} that $\mathcal{C}_v\big(\widehat{\mathcal{B}_{\pm}(G)}\big)\cong\mathsf{q}\big(\mathcal{F}(G)\big)/\mathsf{q}\big(\widehat{\mathcal{B}_{\pm}(G)}\big)$. Therefore $\mathsf{q}\big(\mathcal{F}(G)\big)/\mathsf{q}\big(\mathcal{B}_{\pm}(G)\big)=\mathsf{q}\big(\mathcal{F}(G)\big)/\mathsf{q}\big(\widehat{\mathcal{B}_{\pm}(G)}\big)$ is finitely generated, and hence $\mathsf{q}\big(\mathcal{F}(G)\big)/\mathsf{q}\big(\mathcal{B}(G)\big)$ is finitely generated. Since $G\cong\mathsf{q}\big(\mathcal{F}(G)\big)/\mathsf{q}\big(\mathcal{B}(G)\big)$ by \cite[Proposition 2.5.6]{Ge-HK06a}, we obtain that $G$ is finitely generated. Consequently, $G$ is finite (since $G$ is a torsion group).

\smallskip
(e) $\Longrightarrow$ (a),(b) This follows from \cite[Theorem 5.1]{Ge-HK-Zh22} and its proof.
\end{proof}

\smallskip
\section{On the Isomorphism Problem and the Characterization Problem}\label{4}
\smallskip

In this section, we first give an affirmative answer to the Isomorphism Problem for groups which are direct sums of cyclic groups (Theorem~\ref{4.3}). Then we study the Characterization Problem (Theorems~\ref{4.5} and~\ref{4.6}).

\smallskip
\begin{proposition}\label{4.1}
Let $G_1$ and $G_2$ be abelian groups such that $|G_1|,|G_2|\not=2$ and let $\varphi\colon\mathcal{B}_{\pm}(G_1)\to\mathcal{B}_{\pm}(G_2)$ be a monoid isomorphism.
\begin{enumerate}
\item $\varphi(0)=0$ and $|A|=|\varphi(A)|$ for every $A\in\mathcal{B}_{\pm}(G_1)$.

\item For every $g\in G_1$, there exists $h\in G_2$ with $\ord(h)=\ord(g)$ such that $\varphi(g^2)=h^2$.

\item For every $h\in G_2$, there exists $g\in G_1$ such that $\varphi(g^2)=h^2$.

\item Let $g\in G_1$. For every $k\in\Z\setminus\{0\}$, there exist $h\in G_2$ and $\varepsilon\in\{-1,1\}$ such that $\varphi((kg)^2)=(\varepsilon kh)^2$.

\item There is a bijection $\varphi_0\colon G_1\to G_2$.
\end{enumerate}
\end{proposition}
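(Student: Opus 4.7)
The plan is to use the divisor-theoretic results of Theorem~\ref{3.2} to extend $\varphi$ to an isomorphism of the ambient free abelian monoids $\mathcal{F}(G_i)$; this immediately gives parts~(1), (3), (5) and the ``$\varphi(g^2)=h^2$'' half of~(2), while the order preservation in~(2) and the scalar formula in~(4) are obtained from a targeted atom analysis.

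\textbf{Divisor-theoretic extension.} Because $|G_1|,|G_2|\neq 2$, Theorem~\ref{3.2} gives that $\widehat{\mathcal{B}_{\pm}(G_i)}\hookrightarrow\mathcal{F}(G_i)$ is a divisor theory for $i\in\{1,2\}$. The iso $\varphi$ extends canonically to the quotient groups and restricts to an iso $\widehat\varphi\colon\widehat{\mathcal{B}_{\pm}(G_1)}\to\widehat{\mathcal{B}_{\pm}(G_2)}$ of Krull monoids. By the universal property of divisor theories (unique up to a canonical iso extending the identity on the base), $\widehat\varphi$ lifts to a monoid iso $\overline\varphi\colon\mathcal{F}(G_1)\to\mathcal{F}(G_2)$; since each $\mathcal{F}(G_i)$ is free abelian on $G_i$, $\overline\varphi$ sends primes to primes, inducing a bijection $\varphi_0\colon G_1\to G_2$ and preserving total length. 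Since $\overline\varphi$ agrees with $\varphi$ on $\mathcal{B}_{\pm}(G_1)$, we get $|\varphi(A)|=|A|$; and because the only prime of $\mathcal{F}(G_i)$ lying in $\mathcal{B}_{\pm}(G_i)$ is $0\in G_i$ (the length-$1$ sequence $g$ is a plus--minus weighted zero-sum iff $g=0$), $\varphi(0)=\varphi_0(0)=0$. This settles (1) and (5); for~(3), taking $g := \varphi_0^{-1}(h)$ yields $\varphi(g^2) = \overline\varphi(g^2) = \varphi_0(g)^2 = h^2$.

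\textbf{Atom analysis for (2) and (4).} With $h := \varphi_0(g)$ the equation $\varphi(g^2) = h^2$ is the same calculation, so only $\ord(h) = \ord(g)$ remains in~(2). The isomorphism transports $S_g := \{n \ge 1 : g^n \in \mathcal{B}_{\pm}(G_1)\}$ to $S_h$; a direct $\pm 1$-sum computation shows $S_g$ determines $\ord(g)$ in all cases except when $\ord(g)$ is finite even or infinite, where $S_g = 2\N$. The case $\ord(g) = 2$ I would treat separately, using $g = -g$, the length-$2$ atom $g(-g) = g^2$, and injectivity of $\varphi_0$ to force $h = -h$, i.e., $\ord(h) = 2$. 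For the remaining cases, and simultaneously for~(4), I would exploit the length-$(k{+}1)$ sequences $U_k := g^k(kg)$ for $k \ge 2$ with $kg \ne 0$: choosing the signs $-1$ on each $g$ and $+1$ on $kg$ puts $U_k$ into $\mathcal{B}_{\pm}(G_1)$, and a direct check shows $U_k$ is an atom whenever $\ord(g)$ is large enough relative to $k$ (concretely $\ord(g) \ge 2k-1$ or $\ord(g) = \infty$). The image $\overline\varphi(U_k) = h^k\varphi_0(kg)$ must then be an atom of $\mathcal{B}_{\pm}(G_2)$; writing $h' := \varphi_0(kg)$, the plus--minus relation $\sum_i \epsilon_i h + \epsilon h' = 0$ forces $h' \in \langle h \rangle$, say $h' = mh$, and the no-proper-subfactorization condition (which, by parity and length, rules out $|m| < k$) pins down $|m| = k$, giving $\varphi_0(kg) = \pm kh$. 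This is~(4) for all admissible $k$; applying the symmetric argument with $\varphi^{-1}$ yields $\varphi_0(\langle g \rangle) = \langle h \rangle$ as sets, so $\ord(g) = \ord(h)$, finishing~(2). The boundary case $kg = 0$ in~(4) becomes $kh = 0$, which follows once $\ord(h) = \ord(g)$ is known.

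\textbf{Main obstacle.} The genuinely hard step is the order preservation in~(2), because the coarse invariant $S_g$ cannot distinguish even finite orders from each other or from $\infty$. The atom-analysis resolution involves a mild circularity: to conclude $|m|=k$ above one needs $\ord(h)$ to be large relative to $k$, which is precisely what one wants to show. This is broken by an inductive bootstrap starting at $k=2$ (using that the constraint $S_g = S_h$ already confines $\ord(h)$ to $\{2,4,6,\ldots\}\cup\{\infty\}$ whenever $\ord(g)$ lies there) and by running the atom argument symmetrically through $\varphi^{-1}$.
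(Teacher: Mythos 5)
Your proposal is essentially correct, and for parts (1), (3) and the existence half of (2) it takes a genuinely different and slicker route than the paper. The paper invokes the divisor-theoretic lift (Theorem~\ref{3.2} together with the Uniqueness Theorem for Divisor Theories) only in part~5 and proves (1)--(4) by direct combinatorial arguments: $\varphi(0)=0$ is obtained there via a contradiction argument using an auxiliary element $g\in G_2\setminus\{0,-g_1\}$, and the alternative $\varphi(g^2)=h(-h)$ with $\ord(h)\ge 3$ is excluded by hand. You instead construct $\overline\varphi\colon\mathcal{F}(G_1)\to\mathcal{F}(G_2)$ first and read off $\varphi(0)=0$, length preservation, $\varphi(g^2)=\varphi_0(g)^2$, and part (3) from the induced bijection of bases; this works (the lift does restrict to $\varphi$ on $\mathcal{B}_{\pm}(G_1)$) and is a clean simplification. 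For the order preservation in (2) and for (4), your atom analysis with $U_k=g^k(kg)$ is in substance the paper's argument (which uses $g^{\ord(g)}$, $(mg)g^m$, $(kg)g^{k'}$). Note that the key step --- $h^kh'$ being an atom forces $h'=\pm kh$, since any relation gives $h'=mh$ with $|m|\le k$ and $m\equiv k\bmod 2$, while $|m|<k$ yields the factorization $\bigl(h^{|m|}h'\bigr)\bigl(h^{k-|m|}\bigr)$ --- requires no prior knowledge of $\ord(h)$, so the circularity you flag is milder than you suggest and your bootstrap does close.

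The one concrete incompleteness is in (4): your argument covers only the ``admissible'' $k$ with $kg\ne 0$ and $\ord(g)\ge 2k-1$ (or $\ord(g)=\infty$), and you do not say how to handle $k$ with $2k-1>\ord(g)$. The paper does this by replacing $k$ with $k'=\min\{k_1,\ord(g)-k_1\}$, where $k_1\in[0,\ord(g)-1]$ and $k_1\equiv k\bmod{\ord(g)}$, working with the atom $(kg)g^{k'}$ and then using $\ord(h)=\ord(g)$ to convert $\pm k'h$ into $\pm kh$. This reduction is routine but necessary; with it (and your remark on the case $kg=0$) the proposal becomes a complete proof.
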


\begin{proof}
1. We first show that $\varphi(0)=0$. Assume to the contrary that there exists $U\in\mathcal{A}\big(\mathcal{B}_{\pm}(G_2)\big)$ with $U\neq 0$ such that $\varphi(0)=U$. Then $0\not\in\supp(U)$ and $|U|\ge 2$. Suppose $U=g_1\ldots g_{\ell}$. Then there exist nontrivial $T_1,\ldots,T_{\ell}\in\mathcal{B}_{\pm}(G_1)$ such that $\varphi(T_i)=g_i^2$, whence $\varphi(T_1\ldots T_{\ell})=U^2=\varphi(0^2)$. Thus $0^2=T_1\ldots T_{\ell}$, whence $\ell=2$, $T_1=T_2=0$, and $U=g_1^2$. Let $g\in G_2\setminus\{0,-g_1\}$ and let $V_1,V_2,V\in\mathcal{A}\big(\mathcal{B}_{\pm}(G_1)\big)$ such that $\varphi(V_1)=g^2$, $\varphi(V_2)=(g_1+g)^2$, $\varphi(V)=g_1g(g_1+g)$. Then $\varphi(0V_1V_2)=g_1^2g^2(g_1+g)^2=(g_1g(g_1+g))^2=\varphi(V^2)$, whence $0V_1V_2=V^2$ and hence $0\t V$. It follows from $V\in\mathcal{A}\big(\mathcal{B}_{\pm}(G_1)\big)$ that $V=0$, a contradiction.

Let $A=0^kB$ be such that $k\in\N_0$ and $0\not\in\supp(B)$. Then $|B|=\max\mathsf{L}(B^2)=\max\mathsf{L}(\varphi(B^2))=\max\mathsf{L}((\varphi(B))^2)=|\varphi(B)|$, whence $|A|=k+|B|=k+|\varphi(B)|=|\varphi(A)|$.

\smallskip
2. Let $g\in G_1$. If $g=0$, then the assertion is trivial. Suppose $g\neq 0$. Since $g^2\in\mathcal{B}_{\pm}(G_1)$, we have $\varphi(g^2)\in\mathcal{B}_{\pm}(G_2)$ and $|\varphi(g^2)|=2$. Thus there exists $h\in G_2$ such that $\varphi(g^2)\in\{h^2,h(-h)\}$. Assume to the contrary that $\varphi(g^2)=h(-h)$ with $\ord(h)\ge 3$. Then there exist nontrivial $T_1,T_2\in\mathcal{B}_{\pm}(G_1)$ such that $\varphi(T_1)=h^2$ and $\varphi(T_2)=(-h)^2$, whence $\varphi(T_1T_2)=h^2(-h)^2=\varphi(g^2)^2=\varphi(g^4)$. It follows that $T_1=T_2=g^2$ and $h=-h$, a contradiction to the assumption that $\ord(h)\ge 3$.

It remains to show that $\ord(h)=\ord(g)$. We distinguish three cases.

\smallskip
\noindent
CASE 1: $\ord(g)$ is odd.

Then $g^{\ord(g)}$ is an atom and $\varphi(g^{\ord(g)})^2=\varphi(g^{2\ord(g)})=h^{2\ord(g)}$, whence $\varphi(g^{\ord(g)})=h^{\ord(g)}\in\mathcal{A}\big(\mathcal{B}_{\pm}(G_2)\big)$. It follows that $\ord(h)=\ord(g)$.

\smallskip
\noindent
CASE 2: $\ord(g)=2m$ for some $m\in\N$.

Then $(mg)g^m$ is an atom and $\varphi((mg)g^m)^2=\varphi((mg)^2)\varphi(g^2)^m=h_0^2h^{2m}$ for some $h_0\in G_2$, whence $\varphi((mg)g^m)=h_0h^m\in\mathcal{A}\big(\mathcal{B}_{\pm}(G_2)\big)$. It follows that $h_0\in\{mh,-mh\}$. Suppose $\ord(h_0)\ge 3$. Then there exist $g'\in G_1$ with $g'\neq mg$ and $T\in\mathcal{B}_{\pm}(G_1)$ such that $\varphi((g')^2)=(-h_0)^2$ and $\varphi(T)=h_0(-h_0)$. Then $\varphi((mg)^2g'^2)=h_0^2(-h_0)^2=(h_0(-h_0))^2=\varphi(T^2)$, whence $T=(mg)g'\in\mathcal{B}_{\pm}(G_1)$. Note that $2mg=0$. We have $g'=mg$, a contradiction. Suppose $\ord(h_0)=2$. Then $h_0=mh$ and $\ord(h)\t 2m=\ord(g)$. If $\ord(h)<2m$, then $\ord(h)\le m$ and hence $(mh)h^m=(mh)h^{m-\ord(h)}\cdot h^{\ord(h)}$ is not an atom, a contradiction. Thus $\ord(h)=2m=\ord(g)$.

\smallskip
\noindent
CASE 3: $\ord(g)=\infty$.

Then for every $k\in\N$, we have $(kg)g^k\in\mathcal{A}\big(\mathcal{B}_{\pm}(G)\big)$. Assume to the contrary that $\ord(h)=n$ is finite. Then $\varphi((ng)g^n)^2=\varphi((ng)^2g^{2n})=h_0^2(h^n)^2$ for some $h_0\in G_2$, whence $\varphi((ng)g^n)=h_0h^n\in\mathcal{A}\big(\mathcal{B}_{\pm}(G_2)\big)$ and hence $h_0\in\{nh,-nh\}=\{0\}$, a contradiction.

\smallskip
3. Note that $\varphi^{-1}\colon\mathcal{B}_{\pm}(G_2)\rightarrow\mathcal{B}_{\pm}(G_1)$ is a monoid isomorphism. Let $h\in G_2$. Then 2. implies that there exists $g\in G_1$ such that $\varphi^{-1}(h^2)=g^2$ and hence $\varphi(g^2)=h^2$.

\smallskip
4. Let $g\in G_1$. Then 2. implies that there exists $h\in G_2$ with $\ord(h)=\ord(g)$ such that $\varphi(g^2)=h^2$. Let $k\in\Z\setminus\{0\}$. We set $k'=|k|$ if $\ord(g)$ is infinite and set $k'=\min\{k_1,\ord(g)-k_1\}$, where $k_1\in [0,\ord(g)-1]$ with $k_1\equiv k\mod {\ord(g)}$, if $\ord(g)$ is finite. Then $(kg)g^{k'}\in\mathcal{A}\big(\mathcal{B}_{\pm}(G_1)\big)$. Let $h_0\in G_2$ be such that $\varphi((kg)^2)=h_0^2$. Then $\varphi((kg)g^{k'})^2=\varphi((kg)^2g^{2k'})=h_0^2h^{2k'}=(h_0h^{k'})^2$, whence $\varphi((kg)g^{k'})=h_0h^{k'}$ is an atom. It follows that $h_0\in\{k'h,-k'h\}=\{kh,-kh\}$, whence there exists $\varepsilon\in\{-1,1\}$ such that $h_0=\varepsilon kh$ and $\varphi((kg)^2)=(\varepsilon kh)^2$.

\smallskip
5. An isomorphism $\mathcal{B}_{\pm}(G_1)\to\mathcal{B}_{\pm}(G_2)$ lifts to an isomorphism $\widetilde{\mathcal{B}_{\pm}(G_1)}\to\widetilde{\mathcal{B}_{\pm}(G_2)}$. Since the inclusions $\widetilde{\mathcal{B}_{\pm}(G_1)}\hookrightarrow\mathcal{F}(G_1)$ and $\widetilde{\mathcal{B}_{\pm}(G_2)}\hookrightarrow\mathcal{F}(G_2)$ are divisor theories by Theorem~\ref{3.2}, the Uniqueness Theorem for Divisor Theories (\cite[Theorem 2.4.7]{Ge-HK06a}) shows that there is an isomorphism $\psi\colon\mathcal{F}(G_1)\to\mathcal{F}(G_2)$. Each isomorphism between two free abelian monoids stems from a bijection between the bases sets, whence the claim follows.
\end{proof}

In the next remark we provide a simple example showing that such a bijection $\varphi_0\colon G_1\to G_2$, as given above, need not be a homomorphism.

\smallskip
\begin{remark}\label{4.2}
Let $G$ be an abelian group and let $g\in G$ with $\ord(g)\ge 5$. Then the map $\varphi\colon G\rightarrow G$, defined by $\varphi(g)=-g$, $\varphi(-g)=g$, and $\varphi(h)=h$ for all $h\in G\setminus\{g,-g\}$, is a bijection. Since $\varphi(2g)=2g\neq-2g=\varphi(g)+\varphi(g)$, we observe that $\varphi$ is not a homomorphism. The bijection $\varphi$ induces a monoid isomorphism $\psi\colon\mathcal{F}(G)\rightarrow\mathcal{F}(G)$, and it is easy to see that the restriction $\psi|_{\mathcal{B}_{\pm}(G)}$ is also a monoid isomorphism. Thus, we have an isomorphism between monoids of plus-minus weighted zero-sum sequences, which does not stem from a group homomorphism.
\end{remark}

\smallskip
\begin{theorem}\label{4.3}
Let $G_1$ and $G_2$ be abelian groups and suppose that $G_1$ is a direct sum of cyclic groups. Then the groups $G_1$ and $G_2$ are isomorphic if and only their monoids of plus-minus weighted zero-sum sequences $\mathcal{B}_{\pm}(G_1)$ and $\mathcal{B}_{\pm}(G_2)$ are isomorphic.
\end{theorem}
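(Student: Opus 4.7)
My plan is as follows. The forward implication is immediate: any group isomorphism $G_1\to G_2$ extends canonically to a monoid isomorphism $\mathcal{F}(G_1)\to\mathcal{F}(G_2)$ which restricts to $\mathcal{B}_{\pm}(G_1)\to\mathcal{B}_{\pm}(G_2)$. For the converse, fix a monoid isomorphism $\varphi\colon\mathcal{B}_{\pm}(G_1)\to\mathcal{B}_{\pm}(G_2)$. I first dispose of the case $|G_1|\le 2$: Lemma~\ref{3.1} then makes $\mathcal{B}_{\pm}(G_1)$ factorial of rank $|G_1|$, hence $\mathcal{B}_{\pm}(G_2)$ is factorial, hence $|G_2|\le 2$ by the same lemma, and rank-invariance of free abelian monoids gives $|G_2|=|G_1|$; the symmetric argument precludes $|G_1|\ge 3$ with $|G_2|\le 2$, so from here on I may assume $|G_1|,|G_2|\ge 3$.

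Now I apply Proposition~\ref{4.1}. Lifting $\varphi$ through the divisor theories $\widetilde{\mathcal{B}_{\pm}(G_i)}\hookrightarrow\mathcal{F}(G_i)$ yields an isomorphism $\psi\colon\mathcal{F}(G_1)\to\mathcal{F}(G_2)$ whose restriction $\varphi_0:=\psi|_{G_1}\colon G_1\to G_2$ is the base-bijection; one has $\psi|_{\mathcal{B}_{\pm}(G_1)}=\varphi$, $\ord(\varphi_0(g))=\ord(g)$ (Proposition~\ref{4.1}.2), and $\varphi_0(kg)=\pm k\,\varphi_0(g)$ whenever $kg\ne 0$ (Proposition~\ref{4.1}.4). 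The same statements applied to $\varphi^{-1}$ give $\varphi_0^{-1}(kh)=\pm k\,\varphi_0^{-1}(h)$ whenever $kh\ne 0$. Next, I pick a cyclic decomposition $G_1=\bigoplus_{i\in I}\langle g_i\rangle$, set $h_i:=\varphi_0(g_i)\in G_2$, and define $\tilde\varphi\colon G_1\to G_2$ by
\[
\tilde\varphi\Big(\sum_{i\in I}k_ig_i\Big)=\sum_{i\in I}k_ih_i\,.
\]
Since $\ord(h_i)=\ord(g_i)$, this is a well-defined group homomorphism; the remaining task is to prove bijectivity.

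My device for both surjectivity and injectivity is to push a suitable zero-sum sequence through $\psi$ or $\psi^{-1}$ and to read off the plus-minus zero-sum relation in the target. For surjectivity, given $h=\varphi_0(g)\in G_2$ with $g=\sum_{i\in F}k_ig_i$, $F$ finite and each $k_ig_i\ne 0$, the zero-sum sequence $U=(-g)\prod_{i\in F}(k_ig_i)\in\mathcal{B}_{\pm}(G_1)$ has image $\psi(U)=(\pm h)\prod_{i\in F}(\pm k_ih_i)\in\mathcal{B}_{\pm}(G_2)$, and the plus-minus zero-sum condition yields $h=\sum_{i\in F}(\pm k_i)h_i$, so $h\in\tilde\varphi(G_1)$. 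For injectivity, I suppose $\sum_{i\in F}k_ih_i=0$ with each $k_ig_i\ne 0$; then $V=\prod_{i\in F}(k_ih_i)\in\mathcal{B}_{\pm}(G_2)$ is sent by $\psi^{-1}$ to $\prod_{i\in F}(\pm k_ig_i)\in\mathcal{B}_{\pm}(G_1)$, whose plus-minus zero-sum relation reads $\sum_{i\in F}(\pm k_i)g_i=0$ in $G_1$; directness of $G_1=\bigoplus_i\langle g_i\rangle$ then forces each $k_ig_i=0$, a contradiction.

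The principal obstacle I foresee is sign-tracking. Remark~\ref{4.2} shows that $\varphi_0$ itself need not be a homomorphism, and the signs occurring in $\varphi_0(kg)=\pm k\,\varphi_0(g)$ and in the plus-minus zero-sum relations for $\psi(U)$ and $\psi^{-1}(V)$ must be chosen independently at each occurrence. What saves the argument is precisely the direct-sum hypothesis on $G_1$: a relation $\pm k_ig_i=0$ inside a direct summand is equivalent to $k_ig_i=0$ regardless of sign, which is exactly the collapse used in the injectivity step. Dropping the direct-sum assumption on $G_1$ would first require establishing an analogous decomposition on $G_2$, which is not available from the tools of Section~\ref{4} alone.
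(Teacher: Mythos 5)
Your proposal is correct and follows essentially the same route as the paper: reduce to $|G_1|,|G_2|\ge 3$ via Lemma~\ref{3.1}, use Proposition~\ref{4.1}.2--4 to define the candidate homomorphism on a cyclic basis, and establish surjectivity and injectivity by pushing suitable zero-sum witness sequences through the isomorphism and exploiting the independence of the generators. The only (cosmetic) difference is that you transport sequences term-wise via the lifted isomorphism $\psi$ of free abelian monoids from Proposition~\ref{4.1}.5, whereas the paper obtains the same identities by squaring and using uniqueness of square roots in $\mathcal{F}(G_2)$.
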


\begin{proof}
If $G_1$ and $G_2$ are isomorphic, then the associated monoids $\mathcal{B}_{\pm}(G_1)$ and $\mathcal{B}_{\pm}(G_2)$ are isomorphic. Conversely, suppose we have a monoid isomorphism $\varphi\colon\mathcal{B}_{\pm}(G_1)\to\mathcal{B}_{\pm}(G_2)$. If one of the monoids is factorial, then both are factorial and Lemma~\ref{3.1} shows that $G_1$ and $G_2$ are isomorphic.

Suppose that none of the monoids is factorial. Then Lemma~\ref{3.1} implies that $|G_1|\ge 3$ and $|G_2|\ge 3$. Suppose that $G_1=\bigoplus_{j\in J}\langle\{e_j\}\rangle$. By Proposition~\ref{4.1}.2, there exist $f_j\in G_2$ with $\ord(f_j)=\ord(e_j)$ and $\varphi(e_j^2)=f_j^2$ for all $j\in J$. We define a group homomorphism $\psi\colon G_1\rightarrow G_2$ by setting $\psi(\sum_{i\in I}k_ie_i)=\sum_{i\in I}k_if_i$ for all finite subsets $I\subset J$ and all $k_i\in\Z$ with $i\in I$.

We first show that $\psi$ is surjective. Let $h\in G_2$. We need to show that $h\in\psi(G_1)$. By Proposition~\ref{4.1}.3, there exists $g\in G_1$ such that $\varphi(g^2)=h^2$ and $g=\sum_{j\in J_0}t_je_j$ for some finite subset $J_0\subset J$ and $t_j\in\Z\setminus\{0\}$ for all $j\in J_0$, whence $g\prod_{j\in J_0}e_j^{|t_j|}\in\mathcal{B}_{\pm}(G_1)$ and $\varphi(g^2\prod_{j\in J_0}e_j^{2|t_j|})=h^2\prod_{j\in J_0}f_j^{2|t_j|}$. It follows that $\varphi(g\prod_{j\in J_0}e_j^{|t_j|})=h\prod_{j\in J_0}f_j^{|t_j|}\in\mathcal{B}_{\pm}(G_2)$, whence $h\in\langle\{f_j\colon j\in J_0\}\rangle\subset\psi(G_1)$.

It remains to show that $\psi$ is a monomorphism. Assume to the contrary that $\psi$ is not a monomorphism. Then there exist finite $\emptyset\neq I\subset J$ and $k_i\in\Z\setminus\{0\}$ for $i\in I$ such that $\sum_{i\in I}k_if_i=0$. By Proposition~\ref{4.1}.4, there exist $\varepsilon_i\in\{-1,1\}$ for all $i\in I$ such that $\varphi((\varepsilon_ik_ie_i)^2)=(k_if_i)^2$ for all $i\in I$, whence $\varphi(\prod_{i\in I}(\varepsilon_ik_ie_i)^2)=(\prod_{i\in I}k_if_i)^2$. Let $T\in\mathcal{B}_{\pm}(G_1)$ be such that $\varphi(T)=\prod_{i\in I}k_if_i$, whence $\varphi(T^2)=\varphi(\prod_{i\in I}(\varepsilon_ik_ie_i)^2)$ and hence $T=\prod_{i\in I} (\varepsilon_ik_ie_i)\in\mathcal{B}_{\pm}(G_1)$, a contradiction to the independence of $(e_i)_{i\in I}$.
\end{proof}

\smallskip
Our next goal is to settle the Characterization Problem for cyclic groups of odd order (Theorem~\ref{4.6}; a weaker result in this direction is given by \cite[Theorem 6.10]{Ge-HK-Zh22}). In order to do so, we need some more invariants controlling the structure of sets of lengths.

Let $H$ be a BF-monoid. Then
\[
\Delta(H)=\bigcup_{L\in\mathcal{L}(H)}\Delta(L)\ \subset\N
\]
denotes the {\it set of distances} of $H$. By definition, we have that $H$ is half-factorial if and only if $\Delta(H)=\emptyset$. If $H$ is not half-factorial, then $\min\Delta(H)=\gcd\Delta(H)$. Let $\omega(H)$ be the smallest $N\in\N_0\cup\{\infty\}$ with the following property.
\begin{itemize}
\item[] For all $u\in\mathcal{A}(H)$, all $n\in\N$ and all $a_1,\ldots,a_n\in H$ with $u\t a_1\cdot\ldots\cdot a_n$, there is $\Omega\subset [1,n]$ such that $|\Omega|\le N$ and $u\t\prod_{\nu\in\Omega} a_{\nu}$.
\end{itemize}
If $H$ is not half-factorial, then, by \cite[Proposition 3.6.3]{Ge-Ka10a}, we have
\begin{equation}\label{catenary}
2+\sup\Delta(H)\le\omega(H)\,.
\end{equation}
A subset $L\subset\Z$ is said to be an {\it almost arithmetic progression} (AAP) with difference $d\in\N$, length $\ell$, and bound $M$ if
\[
L=y+(L'\cup L^*\cup L'')\subset y+d\Z\,,
\]
where $L^*$ is an arithmetic progression with difference $d$, length $\ell$, and $\min L^*=0$, $L'\subset [-M,-1]$, and $L''\subset\max L^*+[1,M]$. We define $\Delta_1(H)$ to be the set of all $d\in\N$ having the following property:
\begin{itemize}
\item[] For every $k\in\N$, there is some $L_k\in\mathcal{L}(H)$ that is an AAP with difference $d$ and length at least $k$.
\end{itemize}

For $k\in\N$, we denote by
\begin{itemize}
\item $\mathcal{U}_k(H)=\bigcup_{k\in L,L\in\mathcal{L}(H)}\ L\ \subset\N $ the {\it union of sets of lengths} (containing $k$), and by
\item $\rho_k(H)=\sup\mathcal{U}_k(H)$ the $k$th {\it elasticity} of $H$.
\end{itemize}
The unions $\mathcal{U}_k\big(\mathcal{B}_{\pm}(G)\big)$ are finite intervals by \cite[Theorem 5.2]{B-M-O-S22} and for the elasticity $\rho(H)$ we have
\[
\rho(H)=\sup\Big\{\frac{\max L}{\min L}\colon\{0\}\ne L\in\mathcal{L}(H)\Big\}=\lim_{k\to\infty}\frac{\rho_k(H)}{k}\,.
\]

\smallskip
\begin{lemma}\label{4.4}
Let $G_1$ and $G_2$ be finite abelian groups such that $\mathcal{L}\big(\mathcal{B}_{\pm}(G_1)\big)=\mathcal{L}\big(\mathcal{B}_{\pm}(G_2)\big)$.
\begin{enumerate}
\item $\max\Delta_1\big(\mathcal{B}_{\pm}(G_1)\big)=\max\Delta_1\big(\mathcal{B}_{\pm}(G_2)\big)$.
\item $\rho_k\big(\mathcal{B}_{\pm}(G_1)\big)=\rho_k\big(\mathcal{B}_{\pm}(G_2)\big)$ for every $k\in\N$, and $\mathsf{D}_{\pm}(G_1)=\mathsf{D}_{\pm}(G_2)$.
\end{enumerate}
\end{lemma}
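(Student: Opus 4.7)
The driving observation is that, apart from the Davenport constant, every invariant in items~1 and~2 is defined intrinsically in terms of the system of sets of lengths. So the bulk of the lemma is immediate from the hypothesis, and the only genuine task is to express $\mathsf{D}_{\pm}(G)$ as an invariant of $\mathcal{L}\big(\mathcal{B}_{\pm}(G)\big)$.

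First I would unwind definitions to dispose of item~1 and the first half of item~2. The set $\Delta_1(H)$ is defined purely in terms of the AAP structure of sets appearing in $\mathcal{L}(H)$, and the union $\mathcal{U}_k(H)=\bigcup_{L\in\mathcal{L}(H),\,k\in L}L$ together with $\rho_k(H)=\sup\mathcal{U}_k(H)$ depend only on $\mathcal{L}(H)$. The hypothesis $\mathcal{L}\big(\mathcal{B}_{\pm}(G_1)\big)=\mathcal{L}\big(\mathcal{B}_{\pm}(G_2)\big)$ therefore gives $\max\Delta_1\big(\mathcal{B}_{\pm}(G_1)\big)=\max\Delta_1\big(\mathcal{B}_{\pm}(G_2)\big)$ and $\rho_k\big(\mathcal{B}_{\pm}(G_1)\big)=\rho_k\big(\mathcal{B}_{\pm}(G_2)\big)$ for every $k\in\N$ at once.

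For the Davenport constant equality it is then enough to establish the identity $\rho_2\big(\mathcal{B}_{\pm}(G)\big)=\mathsf{D}_{\pm}(G)$ for every finite abelian group $G$ with $|G|\ge 2$; combined with the $\rho_2$-equality above this forces $\mathsf{D}_{\pm}(G_1)=\mathsf{D}_{\pm}(G_2)$. For the lower bound, I would pick an atom $U=g_1\cdot\ldots\cdot g_{\ell}$ with $\ell=\mathsf{D}_{\pm}(G)$; since $0$ is itself an atom of length $1$, maximality of $\ell$ forces $0\notin\supp(U)$, so each $g_i^2$ is again an atom of $\mathcal{B}_{\pm}(G)$. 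Then $U^2$ admits the length-$2$ factorization $U\cdot U$ and the length-$\ell$ factorization $g_1^2\cdot\ldots\cdot g_{\ell}^2$, yielding $\{2,\ell\}\subset\mathsf{L}(U^2)$ and hence $\rho_2\big(\mathcal{B}_{\pm}(G)\big)\ge\ell$. For the reverse bound, take $a\in\mathcal{B}_{\pm}(G)$ with $2\in\mathsf{L}(a)$, write $a=UV$ with atoms $U,V$, and note $|a|\le 2\mathsf{D}_{\pm}(G)$. Decomposing $a=0^m b$ with $0\notin\supp(b)$, the pair $\{U,V\}$ must match this decomposition, which leaves only $m\in\{0,1,2\}$: either both $U,V$ are non-zero atoms and $m=0$, or exactly one equals $0$ and $m=1$ with $b$ an atom, or $U=V=0$ and $m=2$ with $b=1$. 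In the latter two cases $\max\mathsf{L}(a)=2\le\mathsf{D}_{\pm}(G)$ trivially; in the first case every atom used in any factorization of $a$ has length at least $2$, so the total length is bounded by $|a|/2\le\mathsf{D}_{\pm}(G)$.

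The only (and fairly minor) obstacle is the bookkeeping around the length-$1$ atom $0$, which forces the short three-way case split in the upper bound; everything else is pure definition-chasing, in line with the informal remark from the introduction that $\mathsf{D}_{\pm}(G_1)=\mathsf{D}_{\pm}(G_2)$ follows ``easily''.
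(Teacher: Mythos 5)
Your proposal takes essentially the same route as the paper: both treat items~1 and the $\rho_k$-equality as immediate consequences of the fact that $\Delta_1$ and $\rho_k$ depend only on the system of sets of lengths, and both reduce $\mathsf{D}_{\pm}(G_1)=\mathsf{D}_{\pm}(G_2)$ to the identity $\rho_2\big(\mathcal{B}_{\pm}(G)\big)=\mathsf{D}_{\pm}(G)$. The only difference is that the paper cites this identity from the literature (Theorem 5.7 of Boukheche--Merito--Ordaz--Schmid), whereas you prove it directly, and your argument for it is correct (atoms of length at least $2$ avoid $0$, so $U^2=g_1^2\cdot\ldots\cdot g_{\ell}^2$ gives the lower bound, and $\max\mathsf{L}(a)\le |a|/2\le\mathsf{D}_{\pm}(G)$ gives the upper bound).
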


\begin{proof}
The claims on $\Delta_1(\cdot )$ and on $\rho_k(\cdot)$ follow immediately from $\mathcal{L}\big(\mathcal{B}_{\pm}(G_1)\big)=\mathcal{L}\big(\mathcal{B}_{\pm}(G_2)\big)$. Since $\rho_2\big(\mathcal{B}_{\pm}(G_i)\big)=\mathsf{D}_{\pm}(G_i)$ for $i\in [1,2]$ by \cite[Theorem 5.7]{B-M-O-S22}, we infer that $\mathsf{D}_{\pm}(G_1)=\mathsf{D}_{\pm}(G_2)$.
\end{proof}

\smallskip
Let $G=C_{n_1}\oplus\ldots\oplus C_{n_r}$ with $1<n_1\t\ldots\t n_r$. We set
\[
\mathsf{D}^*(G)=1+\sum_{i=1}^r (n_i-1)\,.
\]
Then $\mathsf{D}^*(G)\le\mathsf{D}(G)$ and equality holds if $r\le 2$ or if $G$ is a $p$-group (see \cite[Chapter 5]{Ge-HK06a}). If $|G|$ has odd order, then $\mathsf{D}_{\pm}(G)=\mathsf{D}(G)$ by \cite[Corollary 6.2]{B-M-O-S22}. Set $n_0=1$. If $n$ is even, then $\mathsf{D}_{\pm}(C_n)=1+n/2$ and if $t\in [0,r]$ is maximal such that $2\nmid n_t$, then
\[
\mathsf{D}_{\pm}(G)\ge 1+\sum_{i=1}^t (n_i-1)+\sum_{i=t+1}^r\frac{n_i}{2}
\]
(see \cite[Theorem 6.7 and Corollary 6.8]{B-M-O-S22}).
This shows that $G$ has Davenport constant $\mathsf{D}_{\pm}(G)=3$ if and only if $G$ is isomorphic to one of the groups
\[
C_3,\ C_4 ,\ C_2\oplus C_2\,.
\]
Furthermore, we have $\mathsf{D}_{\pm}(G)=4$ if and only if $G$ is either isomorphic to $C_2^3$ or to $C_2\oplus C_4$. Indeed, the above mentioned results on the Davenport constant show that no other finite abelian groups $G$ can have $\mathsf{D}_{\pm}(G)=4$ and in the following theorem we outline that $\mathsf{D}_{\pm}(C_2\oplus C_4)=4$.

\smallskip
\begin{theorem}\label{4.5}
\textnormal{ }
\begin{enumerate}
\item $\mathcal{L}\big(\mathcal{B}_{\pm}(C_1)\big)=\mathcal{L}\big(\mathcal{B}_{\pm}(C_2)\big)=\big\{\{k\}\colon k\in\N_0\big\}$.
\item $\mathcal{L}\big(\mathcal{B}_{\pm}(C_3)\big)=\mathcal{L}\big(\mathcal{B}_{\pm}(C_4)\big)=\mathcal{L}\big(\mathcal{B}_{\pm}(C_2\oplus C_2)\big)=\big\{y+2k+[0,k]\colon y,k\in\N_0\big\}$.
\item $\mathcal{L}\big(\mathcal{B}_{\pm}(C_2^3)\big)=\bigl\{y+(k+1)+[0,k]\colon\, y\in\N_0,\ k\in [0,2]\bigr\}\ \cup$\\
$\hspace*{2.2cm}\bigl\{y+k+[0,k]\colon\, y\in\N_0,\ k\ge 3\bigr\}\cup\bigl\{y+2k+2\cdot [0,k]\colon\, y ,\, k\in\N_0\bigr\}$.
\item $\mathsf{D}_{\pm}(C_2\oplus C_4)=4$, $\Delta\big(\mathcal{B}_{\pm}(C_2\oplus C_4)\big)=[1,2]$, and\\
 $\mathcal{L}\big(\mathcal{B}_{\pm}(C_2\oplus C_4)\big)=\bigl\{y+k+[0,k]\colon\, y\in\N_0,\ k\ge 2\bigr\}\cup\bigl\{y+2k+2\cdot [0,k]\colon\, y ,\, k\in\N_0\bigr\}$.
\end{enumerate}
In particular, we have $\mathcal{L}\big(\mathcal{B}_{\pm}(C_2^3)\big)\subsetneq\mathcal{L}\big(\mathcal{B}_{\pm}(C_2\oplus C_4)\big)$.
\end{theorem}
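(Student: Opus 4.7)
The approach is to handle the four parts of the theorem sequentially and then deduce the inclusion at the end. Parts (1)--(3) are essentially computational and rely on small, explicit atom lists, while part (4) is the genuine work. For Part (1), $\mathcal{B}_{\pm}(C_1)$ is the free monoid on the single atom $0$, and $\mathcal{B}_{\pm}(C_2)$ is free on $\{0, e^2\}$ (where $e$ is the nontrivial element of $C_2$); both are factorial, hence every set of lengths is a singleton. For Part (2), each of $C_3$, $C_4$, $C_2\oplus C_2$ has $\mathsf{D}_{\pm}(G)=3$, so atoms have length at most $3$. I would enumerate the (finitely many) atoms for each group --- for $C_3$: $0, e^2, (2e)^2, e(2e), e^3, (2e)^3$; for $C_4$: $0, e^2, (2e)^2, (3e)^2, e(3e)$ together with the length-$3$ atoms $e^2(2e)$, $(3e)^2(2e)$, $e(2e)(3e)$; for $C_2\oplus C_2$ the classical list for $\mathcal{B}(C_2^2)$ --- identify the single nontrivial relation of the form $V\cdot W=U^2$ connecting two length-$3$ atoms with the square of a length-$2$ atom (e.g.\ $(e(2e))^2=e^2\cdot(2e)^2$ in $\mathcal{B}_{\pm}(C_3)$), and argue by induction on total multiplicity that every set of lengths has the form $y+2k+[0,k]$. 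For Part (3), I would use $\mathcal{B}_{\pm}(C_2^3)=\mathcal{B}(C_2^3)$ (elementary $2$-group) and invoke the classical description of $\mathcal{L}(\mathcal{B}(C_2^r))$ available in \cite[Chapter~7]{Ge-HK06a}.

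Part (4) is the substantive step. Writing $C_2\oplus C_4=\langle f\rangle\oplus\langle e\rangle$ with $\ord(f)=2$ and $\ord(e)=4$, the lower bound $\mathsf{D}_{\pm}(C_2\oplus C_4)\ge 4$ follows from the general estimate recalled before the theorem statement, and the matching upper bound requires ruling out atoms of length $\ge 5$ by a finite case analysis on the multiplicity profile of candidate sequences in the $7$-element set $G\setminus\{0\}$. Once the atoms of each length at most $4$ are tabulated, two families of syzygies emerge: length-$2$ versus length-$4$ exchanges of the form $(g_1g_2g_3g_4)^2=g_1^2g_2^2g_3^2g_4^2$ for a length-$4$ atom $g_1g_2g_3g_4$, producing gap~$2$ in the corresponding $\mathsf{L}$; and mixed exchanges involving length-$3$ atoms, producing gap~$1$. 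These syzygies generate the two families $y+2k+2\cdot[0,k]$ and $y+k+[0,k]$ with $k\ge 2$ appearing in the formula. The bound $\sup\Delta(\mathcal{B}_{\pm}(G))\le 2$ follows from \eqref{catenary} combined with a bound $\omega\le 4$ that I expect to read off the atom list, and differences $1$ and $2$ are both witnessed by concrete elements, so $\Delta=[1,2]$.

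The main obstacle is the exhaustive verification, in Part (4), that \emph{no other} set of lengths occurs. This is a finite but delicate combinatorial bookkeeping: one must enumerate factorization types of elements with small minimal factorization length (to handle the shift parameter $y$) and verify in each case that the resulting $\mathsf{L}$ fits one of the two families above. A parallel but simpler verification is needed in Part (2), where three rather different atom structures must nevertheless give rise to the same family of sets of lengths; this is the technical core of Part (2).

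Finally, the strict inclusion $\mathcal{L}(\mathcal{B}_{\pm}(C_2^3))\subsetneq\mathcal{L}(\mathcal{B}_{\pm}(C_2\oplus C_4))$ is an inspection. Each of the three subfamilies defining $\mathcal{L}(\mathcal{B}_{\pm}(C_2^3))$ lies in one of the two subfamilies of $\mathcal{L}(\mathcal{B}_{\pm}(C_2\oplus C_4))$ after a trivial re-parametrisation, so the inclusion holds. The interval $[2,4]$ is realised in $\mathcal{L}(\mathcal{B}_{\pm}(C_2\oplus C_4))$ as $y+k+[0,k]$ with $y=0$, $k=2$, but cannot occur in $\mathcal{L}(\mathcal{B}_{\pm}(C_2^3))$: matching $[2,4]$ against $y+(k+1)+[0,k]$ with $k\in[0,2]$ forces $y=-1$, against $y+k+[0,k]$ with $k\ge 3$ forces $k=2$, and against $y+2k+2\cdot[0,k]$ forces common difference $2$, each incompatible. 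Hence $[2,4]$ witnesses the strict inclusion.
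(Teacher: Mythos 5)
Your overall architecture matches the paper's: Part (1) via factoriality, Part (2) via explicit atom lists and a length-changing relation, Part (3) via $\mathcal{B}_{\pm}(C_2^3)=\mathcal{B}(C_2^3)$ and \cite[Theorem 7.3.2]{Ge-HK06a}, and Part (4) via computing $\mathsf{D}_{\pm}$, tabulating atoms, and proving two inclusions for $\mathcal{L}$. The genuine gap sits exactly where you place ``the main obstacle'': the verification that no further sets of lengths occur in Part (4) is \emph{not} finite bookkeeping, since elements of $\mathcal{B}_{\pm}(C_2\oplus C_4)$ have unbounded length and unbounded sets of lengths, and you supply no mechanism to control them. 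The paper closes this with two structural inputs that your sketch never identifies: first, the elasticity $\rho\big(\mathcal{B}_{\pm}(C_2\oplus C_4)\big)=2$ (from $\rho_2=\mathsf{D}_{\pm}=4$ and \cite[Theorem 5.7]{B-M-O-S22}), which caps $\max\mathsf{L}(B)\le 2\min\mathsf{L}(B)$ and is what confines interval-type sets to the family $y+k+[0,k]$; second, an argument that whenever a gap of size $2$ occurs in $\mathsf{L}(B)$, the support of $B$ is forced (up to an automorphism) into the subset $\{e_1,e_2,e_1+2e_2\}$, over which $\Delta=\{2\}$, so that $\mathsf{L}(B)$ is a genuine arithmetic progression with difference $2$ rather than a mixture of gaps $1$ and $2$. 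Without both ingredients your two families do not exhaust $\mathcal{L}$. Relatedly, $\omega\le 4$ cannot simply be ``read off the atom list'': the paper's proof rests on the observation that every relevant atom not supported on $\{e_1,2e_2,e_1+2e_2\}$ has $\sigma_{\pm}(A)=\{0,2e_2\}$, which is the device for upgrading a divisibility in $\mathcal{F}(G)$ to one in $\mathcal{B}_{\pm}(G)$ after adjoining at most one further factor.

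There are also concrete slips that would derail the computations. Your atom list for $C_3$ omits $e^2(2e)$ and $e(2e)^2$. The relation you name in Part (2), ``$V\cdot W=U^2$ connecting two length-$3$ atoms with the square of a length-$2$ atom'', instantiated by $(e(2e))^2=e^2(2e)^2$, is not length-changing and is not the relevant relation; what produces $y+2k+[0,k]$ is that the product of any two length-$3$ atoms refactors into three length-$2$ atoms. In Part (4) it is false that every length-$4$ atom $A$ yields a gap of $2$ via $\mathsf{L}(A^2)=\{2,4\}$: two of the four such atoms satisfy $\mathsf{L}(A^2)=[2,4]$, and only $e_2^2e_1(e_1+2e_2)$ and $(e_1+e_2)^2e_1(e_1+2e_2)$ give $\{2,4\}$; this distinction is essential both for realizing the family $y+2k+2\cdot[0,k]$ (via high powers of one of these two atoms) and for the exclusion argument. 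Finally, your witness $[2,4]$ for strictness is fine, but the inclusion $\mathcal{L}\big(\mathcal{B}_{\pm}(C_2^3)\big)\subset\mathcal{L}\big(\mathcal{B}_{\pm}(C_2\oplus C_4)\big)$ is not the ``trivial re-parametrisation'' you assert: the two-element sets $y+2+[0,1]$ occurring in the $C_2^3$ list do not match either displayed subfamily for $C_2\oplus C_4$ and must be realized separately (for instance by $\big(e_1e_2(e_1+e_2)\big)^2$), so this step needs an actual check rather than an appeal to the shapes of the formulas.
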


\begin{proof}
1. This follows from Lemma~\ref{3.1}.

\smallskip
2. If $g\in C_3$ with $\ord(g)=3$, then
\[
\mathcal{A}\big(\mathcal{B}_{\pm}(C_3)\big)=\big\{0,g^2,(-g)^2,(-g)g,g^3,g^2(-g),g(-g)^2,(-g)^3\big\}\,.
\]
If $g\in C_4$ with $\ord(g)=4$, then
\[
\mathcal{A}\big(\mathcal{B}_{\pm}(C_4)\big)=\big\{0,g^2,(-g)^2,(-g)g,(2g)^2,(2g)g^2,(2g)(-g)^2,(2g)(-g)g\big\}\,.
\]
If $e_1,e_2\in C_2\oplus C_2$ are distinct and nonzero, then
\[
\mathcal{A}\big(\mathcal{B}_{\pm}(C_2\oplus C_2)\big)=\big\{0,e_1^2,e_2^2,(e_1+e_2)^2,e_1e_2(e_1+e_2)\big\}\,.
\]
This shows that, in each of the three groups, the sequence $S=0$ is the only prime element and the product of any two atoms of length three has a factorization as a product of three atoms of length two. Thus, the assertion follows (details in case of $C_2\oplus C_2$ are given in \cite[Theorem 7.3.2]{Ge-HK06a}).

\smallskip
3. Since $\mathcal{B}(C_2^3)=\mathcal{B}_{\pm}(C_2^3)$, the assertion follows from \cite[Theorem 7.3.2]{Ge-HK06a}.

\smallskip
4. We set $G=C_2\oplus C_4$ and choose a basis $(e_1,e_2)$ of $G$ with $\ord(e_1)=2$ and $\ord(e_2)=4$. Then $G=\{0,e_1,2e_2,e_1+2e_2,\pm e_2,\pm(e_1+e_2)\}$. We proceed in five steps.

\begin{enumerate}
\item[{\bf A1.}] $\mathsf{D}_{\pm}(G)=4$ and $\rho\big(\mathcal{B}_{\pm}(G)\big)=2$.
\end{enumerate}

{\it Proof of {\bf A1.}}
Since $5=\mathsf{D}(G)\ge\mathsf{D}_{\pm}(G)\ge 4$, in order to show $\mathsf{D}_{\pm}(G)=4$, it suffices to prove that for every $A\in\mathcal{A}\big(\mathcal{B}(G)\big)$ with $|A|=5$, we have $A\not\in\mathcal{A}\big(\mathcal{B}_{\pm}(G)\big)$. Let $U\in\mathcal{A}\big(\mathcal{B}(G)\big)$ with $|U|=5$. The elements of $\mathcal{A}\big(\mathcal{B}(C_2\oplus C_4)\big)$ are written down explicitly in \cite[Lemma 4.6]{Ge-Sc-Zh17b}. Here we go briefly through the possible cases. By symmetry and after renumbering if necessary, we may assume that $\mathsf{v}_{e_2}(U)=\mathsf{h}(U)$. Note that $U$ has four terms of order $4$ and one term of order $2$. Moreover, $\{e_1+e_2,e_1-e_2\}\not\subset\{g\in\supp(U)\colon\ord(g)=4\}\subset\{e_2,e_1+e_2,e_1-e_2\}$. We have $\mathsf{h}(U)=3$, and hence $U=e_2^3(e_1+e_2)e_1$ or $U=e_2^3(e_1-e_2)(e_1+2e_2)$, which is not in $\mathcal{A}\big(\mathcal{B}_{\pm}(G)\big)$. Therefore $\mathsf{D}_{\pm}(G)=4$ and $\rho\big(\mathcal{B}_{\pm}(G)\big)=2$ by \cite[Theorem 5.7]{B-M-O-S22}.\qed ({\bf A1})

\begin{enumerate}
\item[{\bf A2.}] On $\mathcal{A}\big(\mathcal{B}_{\pm}(G)\big)$ and some relations.
\end{enumerate}

We set $G_0=\{0,e_1,2e_2,e_1+2e_2,e_2,e_1+e_2\}$ and observe that
\[
\mathcal{L}\big(\mathcal{B}_{\pm}(G)\big)=\mathcal{L}\big(\mathcal{B}_{\pm}(G_0)\big)\ \text{ and }\ \mathsf{D}_{\pm}(G)=\mathsf{D}_{\pm}(G_0)\,.
\]
A simple calculation shows that
\begin{itemize}
\item $\{A\in\mathcal{A}\big(\mathcal{B}_{\pm}(G_0)\big)\colon |A|=4\}=\{e_2^2e_1(e_1+2e_2),(e_1+e_2)^2e_1(e_1+2e_2),e_2(e_1+e_2)e_1(2e_2),e_2(e_1+e_2)(2e_2)(e_1+2e_2)\}$ and
\item $\{A\in\mathcal{A}\big(\mathcal{B}_{\pm}(G_0)\big)\colon |A|=3\}=\{e_1(2e_2)(e_1+2e_2),e_2^2(2e_2),(e_1+e_2)^2(2e_2),e_2(e_1+e_2)(e_1+2e_2),e_2(e_1+e_2)e_1\}$.
\end{itemize}
Note that
\begin{enumerate}
\item[(i)] For every atom $A\in\mathcal{A}\big(\mathcal{B}_{\pm}(G_0)\big)$ of length $4$, we have $\mathsf{L}_{\mathcal{B}_{\pm}(G_0)}(A^2)=\{2,4\}$ if and only if $A\in\{e_2^2e_1(e_1+2e_2),(e_1+e_2)^2e_1(e_1+2e_2)\}$;
\item[(ii)] For every atom $A\in\mathcal{A}\big(\mathcal{B}_{\pm}(G_0)\big)$ of length $4$, we have $\mathsf{L}_{\mathcal{B}_{\pm}(G_0)}(A^2)=[2,4]$ if and only if $A\in\{e_2(e_1+e_2)e_1(2e_2),e_2(e_1+e_2)(2e_2)(e_1+2e_2)\}$;
\item[(iii)] For any two distinct atoms $A_1,A_2\in\mathcal{A}\big(\mathcal{B}_{\pm}(G_0)\big)$ of length $4$, we have $e_1e_2(e_1+e_2)$ dividing $A_1A_2$ in $\mathcal{B}_{\pm}(G_0)$, which implies that $3\in\mathsf{L}_{\mathcal{B}_{\pm}(G_0)}(A_1A_2)$;
\item[(iv)] For any two atoms $A_1,A_2\in\mathcal{A}\big(\mathcal{B}_{\pm}(G_0)\big)$ of length $3$, we have either $3\in\mathsf{L}_{\mathcal{B}_{\pm}(G_0)}(A_1A_2)$ or $A_1A_2=U_1U_2$ for some atoms $U_1,U_2\in\mathcal{A}\big(\mathcal{B}_{\pm}(G_0)\big)$ with $|U_1|=2$ and $|U_2|=4$;
\item[(v)] $\Delta\big(\mathcal{B}_{\pm}(\{e_1,e_2,e_1+2e_2\})\big)=\{2\}$;
\item[(vi)] $\mathsf{L}_{\mathcal{B}_{\pm}(G_0)}(U_1^2)=\{2,4\}$, $\mathsf{L}_{\mathcal{B}_{\pm}(G_0)}(U_2^2)=[2,4]$, and $\mathsf{L}_{\mathcal{B}_{\pm}(G_0)}(U_1U_2U_3)=[3,6]$, where $U_1=e_2^2e_1(e_1+2e_2)$, $U_2=e_2(e_1+e_2)e_1(2e_2)$, and $U_3=e_2(e_1+e_2)(2e_2)(e_1+2e_2)$;
\item[(vii)] For all atoms $A\in\mathcal{A}\big(\mathcal{B}_{\pm}(G_0)\big)\setminus\mathcal{A}\big(\mathcal{B}_{\pm}(\{e_1,2e_2,e_1+2e_2\})\big)$, we have $\sigma_{\pm}(A)=\{0,2e_2\}$.
\end{enumerate}

\medskip
\begin{enumerate}
\item[{\bf A3.}] $\Delta\big(\mathcal{B}_{\pm}(G)\big)=[1,2]$.
\end{enumerate}

{\it Proof of {\bf A3.}} Relation (vi) shows that $[1,2]\subset\Delta\big(\mathcal{B}_{\pm}(G)\big)$. Thus, by the Inequality \eqref{catenary}, it suffices to verify that $\omega\big(\mathcal{B}_{\pm}(G)\big)\le 4$.

Let $A,A_1,\ldots,A_{\ell}\in\mathcal{A}\big(\mathcal{B}_{\pm}(G_0)\big)\setminus\{0\}$ such that $A\t A_1\cdot\ldots\cdot A_{\ell}$ in $\mathcal{B}_{\pm}(G_0)$. If $\ell\le 4$, then there is nothing to do. Suppose $\ell\ge 5$. Since $|A|\le 4$, after renumbering if necessary, we may assume that $A\t A_1\cdot\ldots\cdot A_4$. If $A=g_1g_2g_3g_4$ such that $g_i\t A_i$ for every $i\in [1,4]$, then $A^{-1}A_1\cdot\ldots\cdot A_4\in\mathcal{B}_{\pm}(G_0)$ and hence $A\t A_1\cdot\ldots\cdot A_4$ in $\mathcal{B}_{\pm}(G_0)$. Otherwise after renumbering if necessary we may assume that $A\t A_1A_2A_3$.
Set $A'=A^{-1}A_1 A_2 A_3$. Then (vii) implies that $\sigma_{\pm}(A' )\subset\sigma_{\pm}(A_1 A_2 A_3)=\{0,2e_2\}$. If $0\in\sigma_{\pm}(A')$, then $A\t A_1A_2A_3$ in $\mathcal{B}_{\pm}(G_0)$. Suppose $\sigma_{\pm}(A')=\{2e_2\}$. Since $A\t A_1\cdot\ldots\cdot A_{\ell}$ in $\mathcal{B}_{\pm}(G_0)$, there exists $i\in [4,\ell]$ such that $\sigma_{\pm}(A_i)=\{0,2e_2\}$, whence $0\in\sigma_{\pm}(A'A_i)$. It follows that $A\t A_1A_2A_3A_i$. Therefore $\omega\big(\mathcal{B}_{\pm}(G)\big)=\omega\big(\mathcal{B}_{\pm}(G_0)\big)\le 4$.\qed ({\bf A3})

\medskip
\begin{enumerate}
\item[{\bf A4.}] $\mathcal{L}\big(\mathcal{B}_{\pm}(G_0)\big)\supset\bigl\{y+k+[0,k]\colon\, y\in\N_0,\ k\ge 2\bigr\}\cup\bigl\{y+2k+2\cdot [0,k]\colon \, y ,\, k\in\N_0\bigr\}$.
\end{enumerate}

{\it Proof of {\bf A4.}} Let $L=y+2k+2\cdot [0,k]$ for some $y,k\in\N_0$. Then (v) implies that $\mathsf{L}_{\mathcal{B}_{\pm}(G_0)}(0^yU_1^{2k})=L$, whence $\{y+2k+2\cdot [0,k]\colon\, y ,\, k\in\N_0\bigr\}\subset\mathcal{L}\big(\mathcal{B}_{\pm}(G_0)\big)$.

Let $L=y+k+[0,k]$ for some $y\in\N_0$ and some $k\ge 2$. Suppose $k$ is even. Then (vi) implies that $\mathsf{L}_{\mathcal{B}_{\pm}(G_0)}(0^yU_2^k)=L$. Suppose $k\ge 3$ is odd. Then (v) and (vi) imply that $\mathsf{L}_{\mathcal{B}_{\pm}(G_0)}(0^yU_1^{k-2}U_2U_3)=L$, whence $\{y+k+[0,k]\colon\, y\in\N_0 ,\, k\ge 2\bigr\}\subset\mathcal{L}\big(\mathcal{B}_{\pm}(G_0)\big)$.\qed ({\bf A4})

\medskip
\begin{enumerate}
\item[{\bf A5.}] $\mathcal{L}\big(\mathcal{B}_{\pm}(G_0)\big)\subset\bigl\{y+k+[0,k]\colon\, y\in\N_0,\ k\ge 2\bigr\}\cup\bigl\{y+2k+2\cdot [0,k]\colon\, y ,\, k\in\N_0\bigr\}$.
\end{enumerate}

{\it Proof of {\bf A5.}} Let $B\in\mathcal{B}_{\pm}(G_0)$. We distinguish three cases.

\smallskip
\noindent
CASE 1: $\Delta(\mathsf{L}_{\mathcal{B}_{\pm}(G_0)}(B))=\emptyset$.

Then $\mathsf{L}_{\mathcal{B}_{\pm}(G_0)}(B)\in\bigl\{y+2k+2\cdot [0,k]\colon\, y ,\, k\in\N_0\bigr\}$.

\smallskip
\noindent
CASE 2: $2\in\Delta(\mathsf{L}_{\mathcal{B}_{\pm}(G_0)}(B))$.

We set $B=\prod_{i=1}^{r}U_i^{u_i}\prod_{j=1}^{s}V_j^{v_j}\prod_{k=1}^{t}W_k$, where $r,s,t,u_i,v_j\in\N_0$, $U_i\in\mathcal{A}\big(\mathcal{B}_{\pm}(G_0)\big),i\in [1,r]$ are distinct atoms of length $4$, $V_j\in\mathcal{A}\big(\mathcal{B}_{\pm}(G_0)\big),j\in [1,s]$ are distinct atoms of length $3$, and $W_k\in\mathcal{A}\big(\mathcal{B}_{\pm}(G_0)\big),k\in [1,t]$ are atoms of length $2$, such that $\sum_{i=1}^ru_i+\sum_{j=1}^{s}v_j+t+1\not\in\mathsf{L}_{\mathcal{B}_{\pm}(G_0)}(B)$ and $\sum_{i=1}^ru_i+\sum_{j=1}^{s}v_j+t+2\in\mathsf{L}_{\mathcal{B}_{\pm}(G_0)}(B)$. We may assume that the factorization $B=\prod_{i=1}^{r}U_i^{u_i}\prod_{j=1}^{s}V_j^{v_j}\prod_{k=1}^{t}W_k$ is the one such that $\sum_{i=1}^ru_i$ is maximal. By (iv), we obtain that $\sum_{j=1}^sv_j\le 1$, whence $s=1$ and $v_1\in\{0,1\}$. By (iii), we have $r=1$ and by (i) and (ii), we have $U_1\in\{e_2^2e_1(e_1+2e_2),(e_1+e_2)^2e_1(e_1+2e_2)\}$. After changing bases if necessary, we may assume that $U_1=e_2^2e_1(e_1+2e_2)$. Moreover, we have $u_1\ge 2$ since $u_1+v_1+t+2\in\mathsf{L}_{\mathcal{B}_{\pm}(G_0)}(B)$. If $\supp(B)=\{e_1,e_2,e_1+2e_2\}$, then $v_1=0$ and (v) implies that $\Delta(\mathsf{L}_{\mathcal{B}_{\pm}(G_0)}(B))=\{2\}$, whence $\mathsf{L}_{\mathcal{B}_{\pm}(G_0)}(B)=u_1+t+2\cdot [0,\lfloor u_1/2\rfloor]\in\bigl\{y+2k+2\cdot [0,k]\colon\, y ,\, k\in\N_0\bigr\}$. Otherwise there exists $g\in G_0\setminus\{e_1,e_2,e_1+2e_2\}$ and hence $g=e_1+e_2$ or $2e_2$. If there exists $k\in [1,t]$ such that $g\t W_k$, then $4\in\mathsf{L}_{\mathcal{B}_{\pm}(G_0)}(U_1^2W_k)$, a contradiction. Suppose $g\t V_1$. If $V_1\neq (e_1+e_2)^2(2e_2)$, then $3\in\mathsf{L}_{\mathcal{B}_{\pm}(G_0)}(U_1V_1)$, a contradiction. If $V_1=(e_1+e_2)^2(2e_2)$, then $4\in\mathsf{L}_{\mathcal{B}_{\pm}(G_0)}(U_1^2V_1)$, a contradiction.

\smallskip
\noindent
CASE 3: $1\in\Delta(\mathsf{L}_{\mathcal{B}_{\pm}(G_0)}(B))$.

Then $\Delta(\mathsf{L}_{\mathcal{B}_{\pm}(G_0)}(B))=1$ and $\min\mathsf{L}_{\mathcal{B}_{\pm}(G_0)}(B)\ge 2$, whence
\[
\mathsf{L}_{\mathcal{B}_{\pm}(G_0)}(B)=[\min\mathsf{L}_{\mathcal{B}_{\pm}(G_0)}(B),\max\mathsf{L}_{\mathcal{B}_{\pm}(G_0)}(B)]\subset [\min\mathsf{L}_{\mathcal{B}_{\pm}(G_0)}(B), 2\min\mathsf{L}_{\mathcal{B}_{\pm}(G_0)}(B)]\,.
\]
Therefore $\mathsf{L}_{\mathcal{B}_{\pm}(G_0)}(B)\in\bigl\{y+k+[0,k]\colon\, y\in\N_0,\ k\ge 2\bigr\}$.\qed ({\bf A5})
\end{proof}

\smallskip
\begin{theorem}\label{4.6}
Let $G$ be a finite abelian group and let $n\ge 5$ be odd such that $\mathcal{L}\big(\mathcal{B}_{\pm}(G)\big)=\mathcal{L}\big(\mathcal{B}_{\pm}(C_n)\big)$. Then $G\cong C_n$.
\end{theorem}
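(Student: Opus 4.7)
The plan is to extract numerical invariants from the hypothesis $\mathcal{L}\big(\mathcal{B}_{\pm}(G)\big)=\mathcal{L}\big(\mathcal{B}_{\pm}(C_n)\big)$, use them to force $|G|$ to be odd, and finally invoke the earlier (weaker) characterization theorem \cite[Theorem 6.10]{Ge-HK-Zh22}, which is announced in the excerpt as handling a case this one strengthens.

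First, Lemma~\ref{4.4} will give
\[
\mathsf{D}_{\pm}(G)=\mathsf{D}_{\pm}(C_n)=n,\quad\rho_k\big(\mathcal{B}_{\pm}(G)\big)=\rho_k\big(\mathcal{B}_{\pm}(C_n)\big)\ \text{for all }k\in\N,
\]
together with $\max\Delta_1\big(\mathcal{B}_{\pm}(G)\big)=\max\Delta_1\big(\mathcal{B}_{\pm}(C_n)\big)$. Combined with the explicit formulas for $\mathsf{D}_{\pm}$ recorded before Theorem~\ref{4.5} (drawn from \cite[Corollaries 6.2 and 6.8]{B-M-O-S22}), the equation $\mathsf{D}_{\pm}(G)=n$ restricts the isomorphism type of $G$ to a short finite list of candidates, each of which we intend to rule out or identify with $C_n$.

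The main step will be to prove that $|G|$ is odd. Write $G=G_{(2)}\oplus G_{\mathrm{odd}}$ and assume toward a contradiction that the $2$-Sylow subgroup $G_{(2)}$ is nontrivial. Adapting the five-step analysis A1--A5 in the proof of Theorem~\ref{4.5}(4) to the general situation, we plan to exhibit atoms $U,V\in\mathcal{A}\big(\mathcal{B}_{\pm}(G)\big)$ originating in the $2$-part such that products $U^aV^b$ realise sets of lengths of the form $y+2k+2\cdot[0,k]\in\mathcal{L}\big(\mathcal{B}_{\pm}(G)\big)$ for every $k\in\N$ --- long arithmetic progressions of common difference $2$. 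On the $C_n$-side with $n$ odd, a direct analysis of factorisations of powers of the atom $g^n$ (and of the short atoms $g^2$, $g(-g)$) will show that sets of lengths in $\mathcal{L}\big(\mathcal{B}_{\pm}(C_n)\big)$ have the shape $y+\ell\cdot[0,k]$ for odd $\ell$ (e.g.\ $\ell=3$ when $n=5$), so that step-$2$ arithmetic progressions therein have bounded length. Matching this against the equality of $\max\Delta_1$ from the preceding step produces the desired contradiction.

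Once $|G|$ is odd, \cite[Corollary 6.2]{B-M-O-S22} gives $\mathsf{D}(G)=\mathsf{D}_{\pm}(G)=n$, and \cite[Theorem 6.10]{Ge-HK-Zh22} applies to conclude $G\cong C_n$. \emph{The main obstacle} is precisely the parity step: one must make quantitative the contrast between the ``$2$-torsion driven'' step-$2$ progression structure of $\mathcal{L}\big(\mathcal{B}_{\pm}(G)\big)$ (when $G$ has even order) and the structure of $\mathcal{L}\big(\mathcal{B}_{\pm}(C_n)\big)$ for $n\ge 5$ odd. This will require a classification of short atoms of $\mathcal{B}_{\pm}(G)$ in the spirit of the A2 step of Theorem~\ref{4.5}(4), together with sharp growth bounds on step-$2$ arithmetic progressions in $\mathcal{L}\big(\mathcal{B}_{\pm}(C_n)\big)$, both of which are nontrivial zero-sum combinatorial problems over arbitrary $n$.
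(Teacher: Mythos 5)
Your outline diverges substantially from the paper's argument, and the two steps you flag as the core of your plan both have genuine gaps. First, the reduction ``$\mathsf{D}_{\pm}(G)=n$ restricts $G$ to a short finite list'' is not available: as the introduction points out, the precise value of the (weighted) Davenport constant is unknown beyond rank $2$, and the material before Theorem~\ref{4.5} only records \emph{lower} bounds for $\mathsf{D}_{\pm}(G)$ in general. So while $|G|$ is bounded in terms of $n$, you cannot enumerate the groups with $\mathsf{D}_{\pm}(G)=n$, and ruling out each putative candidate is exactly the content of the theorem, for which you give no mechanism. Second, your parity step rests on the claim that sets of lengths in $\mathcal{L}\big(\mathcal{B}_{\pm}(C_n)\big)$ have the shape $y+\ell\cdot[0,k]$ with $\ell$ odd, so that difference-$2$ progressions have bounded length. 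This is false: $\mathcal{B}(C_n)\subset\mathcal{B}_{\pm}(C_n)$, and for $n\ge 5$ one already has $2\in\Delta_1\big(\mathcal{B}(C_n)\big)$, from which arbitrarily long AAPs with difference $2$ occur in $\mathcal{L}\big(\mathcal{B}_{\pm}(C_n)\big)$ as well; so no contradiction with an even-order $G$ can be extracted this way. Finally, the appeal to \cite[Theorem 6.10]{Ge-HK-Zh22} is a citation to a result whose statement you do not control; the paper explicitly calls it ``a weaker result in this direction,'' and if it sufficed after establishing that $|G|$ is odd, the authors would not have needed the page-long direct argument they give.

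For comparison, the paper's proof has a different shape entirely. After obtaining $\mathsf{D}_{\pm}(G)=\mathsf{D}_{\pm}(C_n)=n$ from Lemma~\ref{4.4}, the key observation is that every $L\in\mathcal{L}\big(\mathcal{B}_{\pm}(C_n)\big)$ with $\{2,n\}\subset L$ satisfies $L=\{2,n\}$ (a relation $A_1A_2=U_1\cdot\ldots\cdot U_n$ forces $|A_1|=|A_2|=n$ and all $|U_j|=2$, whence $\supp(A_i)\subset\{g,-g\}$). Transferring this to $G$, one takes an atom $U$ with $|U|=n$, concludes $\mathsf{L}\big(U^2\big)=\{2,n\}$, and then shows by explicit exchange constructions (replacing two terms of $U$ by $(g_1+g_2)(g_1-g_2)$ or $(g_1-g_3)(g_2+g_3)$) that $|\supp(U)|\ge 2$ always produces a new length-$n$ atom $V$ with $\mathsf{L}(V^2)=\{2,n\}$ yet containing a length-$3$ atom, a contradiction; the case $\mathsf{h}(U)=1$ (leading to $C_2^{n-1}$) is excluded by a $\Delta_1$ argument. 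This leaves $U=g^n$ and hence $G\cong C_n$. You would need to replace both of your main steps with arguments of this combinatorial kind for the proposal to go through.
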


\begin{proof}
Since $n$ is odd, \cite[Corollary 6.2]{B-M-O-S22} implies that $\mathsf{D}_{\pm}(C_n)=\mathsf{D}(C_n)$, whence $n=\mathsf{D}(C_n)=\mathsf{D}_{\pm}(C_n)=\mathsf{D}_{\pm}(G)$ by Lemma~\ref{4.4}. Let $L\in\mathcal{L}\big(\mathcal{B}_{\pm}(C_n)\big)$ be such that $\{2,n\}\subset L$. Then there exist $A_1,A_2,U_1,\ldots,U_n\in\mathcal{A}\big(\mathcal{B}_{\pm}(C_n)\big)$ such that $A_1A_2=U_1\cdot\ldots\cdot U_n$ and $\mathsf{L}_{\mathcal{B}_{\pm}(C_n)}(A_1A_2)=L$. Since $|A_i|\le n$ for every $i\in [1,2]$ and $|U_j|\ge 2$ for every $j\in [1,n]$, we have that $|A_1|=|A_2|=n$ and $|U_j|=2$ for all $j\in [1,n]$. It follows that $\supp(A_1),\supp(A_2)\subset\{g,-g\}$ for some $g\in C_n$ with $\ord(g)=n$, whence $L=\{2,n\}$. Therefore, for every $L\in\mathcal{L}\big(\mathcal{B}_{\pm}(G)\big)$ such that $\{2,n\}\subset L$, we have $L=\{2,n\}$.

Let $U\in\mathcal{A}\big(\mathcal{B}_{\pm}(G)\big)$ with $|U|=\mathsf{D}_{\pm}(G)=n$. Without loss of generality, we may assume that $U\in\mathcal{A}\big(\mathcal{B}(G)\big)$. Since $\{2,n\}\subset\mathsf{L}_{\mathcal{B}_{\pm}(G)}(U^2)$, we have $\{2,n\}=\mathsf{L}_{\mathcal{B}_{\pm}(G)}(U^2)$, whence for every atom $W\in\mathcal{A}\big(\mathcal{B}_{\pm}(G)\big)$ dividing $U^2$, we obtain $|W|\in\{2,n\}$.

If $|\supp(U)|=1$, then $U=g^{n}$ for some $g\in G$ with $\ord(g)=n$ and hence $G\cong C_n$.

Suppose $|\supp(U)|\ge 2$. If there exists $g\in\supp(U)$ with $\ord(g)\ge 3$ such that $\mathsf{v}_g(U)=1$, then we set $V=g^{-1}(2g)(-g)U\in\mathcal{B}_{\pm}(G)$. Since $|V|>n$, we have that $g^{-1}U$ has a decomposition $g^{-1}U=T_1T_2$ such that $(2g)T_1,(-g)T_2\in\mathcal{B}_{\pm}(G)$. Since $gT_2\in\mathcal{B}_{\pm}(G)$ and $|gT_2|<n$, we have that $gT_2$ is a product of atoms of length $2$, a contradiction to the fact that $g\not\in\supp(T_2)$ and $-g\not\in\supp(T_2)$. Thus $\mathsf{h}(U)=1$ implies that all terms of $U$ have order $2$. If $\mathsf{h}(U)=1$, then $\langle\supp(U)\rangle=G$ implies that $G$ is an elementary $2$-group, whence $G\cong C_2^{n-1}$ and $\mathcal{B}_{\pm}(G)=\mathcal{B}(G)$. By \cite[Corollary 6.8.3]{Ge-HK06a}, we have $n-3\in\Delta_1\big(\mathcal{B}(G)\big)=\Delta_1\big(\mathcal{B}_{\pm}(C_n)\big)$. For every $k\in\N$, there exists $S_k\in\mathcal{B}_{\pm}(C_n)$ such that $\mathsf{L}_{\mathcal{B}_{\pm}(C_n)}(S_k)$ is an AAP with difference $n-3$ and length at least $k$.
Since $\mathcal{A}\big(\mathcal{B}_{\pm}(C_n)\big)$ is finite, for every large enough $k\in\N$, there exists $V\in\mathcal{A}\big(\mathcal{B}_{\pm}(C_n)\big)$ such that $V^{2n}\t S_k$ in $\mathcal{B}_{\pm}(C_n)$, whence for every $g\in C_n$, we have $g^{2\ord(g)}\t S_k$ in $\mathcal{B}_{\pm}(C_n)$. It follows that $\ord(g)-2\in\Delta(\mathsf{L}_{\mathcal{B}(C_n)}(S_k))$, and hence $\ord(g)-2$ is a multiple of $n-3$, a contradiction. Therefore we have $\mathsf{h}(U)\ge 2$.

Next we distinguish two cases depending on $|\supp(U)|$.

\smallskip
\noindent
CASE 1: $|\supp(U)|=2$.

Then there exists $g_1\in\supp(U)$ such that $\mathsf{v}_{g_1}(U)\ge 3$. Let $g_2\in\supp(U)\setminus\{g_1\}$ and set $V=g_1^{-2}(g_1+g_2)(g_1-g_2)U$. If $V$ is not an atom, then $(g_1)^{-2}U$ has a decomposition $(g_1)^{-2}U=T_1T_2$ such that $(g_1+g_2)T_1,(g_1-g_2)T_2\in\mathcal{B}_{\pm}(G)$, whence $g_1g_2T_1,g_1g_2T_2\in\mathcal{B}_{\pm}(G)$ are both subsequences of $U^2$ and $3\le |g_1g_2T_1|,|g_1g_2T_2|<n$. It follows that both $g_1g_2T_1$ and $g_1g_2T_2$ are products of atoms of length $2$, whence $U=g_1^2T_1T_2$ is a product of atoms of length $2$, a contradiction. Therefore $V$ is an atom of length $n$. Similarly we can show that $\mathsf{L}_{\mathcal{B}_{\pm}(G)}(V^2)=\{2,n\}$, a contradiction to the fact that $(g_1+g_2)g_1g_2\t V$.

\smallskip
\noindent
CASE 2: $|\supp(U)|\ge 3$.

Then there exist $g_1\in\supp(U)$ with $\mathsf{v}_{g_1}(U)\ge 2$ and distinct $g_2,g_3\in\supp(U)\setminus\{g_1\}$. Set $V=(g_1g_2)^{-1}(g_1-g_3)(g_2+g_3)U$. Assume to the contrary that $V$ is not an atom. Then $(g_1g_2)^{-1}U$ has a decomposition $(g_1g_2)^{-1}U=T_1T_2$ such that $(g_1-g_3)T_1,(g_2+g_3)T_2\in\mathcal{B}_{\pm}(G)$, whence $g_1g_3T_1,g_2g_3T_2\in\mathcal{B}_{\pm}(G)$ are both subsequences of $U^2$ and $3\le |g_1g_3T_1|,|g_2g_3T_2|<n$. It follows that both $g_1g_3T_1$ and $g_2g_3T_2$ are products of atoms of length $2$, whence $U=g_1g_2T_1T_2$ is a product of atoms of length $2$, a contradiction. Therefore $V$ is an atom of length $n$. Similarly we can show that $\mathsf{L}_{\mathcal{B}_{\pm}(G)}(V^2)=\{2,n\}$, a contradiction to the fact that $(g_1-g_3)g_1g_3\t V$.
\end{proof}

\medskip
\noindent
{\bf Acknowledgement.} We would like to thank the referees for going through the manuscript carefully and for all their comments.

\providecommand{\bysame}{\leavevmode\hbox to3em{\hrulefill}\thinspace}
\providecommand{\MR}{\relax\ifhmode\unskip\space\fi MR }
\providecommand{\MRhref}[2]{\href{http://www.ams.org/mathscinet-getitem?mr=#1}{#2}}
\providecommand{\href}[2]{#2}

\end{document}